\newcommand{\dx}{}
\newtheorem{theorem}{Theorem}[section]
\theoremstyle{definition}
\newtheorem{remark}{Remark}[section]
\numberwithin{equation}{section}
\newcommand{\bndmap}{\mathcal{M}}
\newcommand{\mcL}{\mathcal{L}}
\newcommand{\mcO}{\mathcal{O}}
\newcommand{\IR}{\mathbb{R}}
\newcommand{\DtV}{D_{t,\vartheta}}
\newcommand{\DOV}{D_{\Omega,\vartheta}}
\newcommand{\POV}{\partial_{\Omega,\vartheta}}
\newcommand{\DJv}{D_\Omega J(\vartheta)}
\newcommand{\K}{\mathcal{K}}
\newcommand{\F}{\mathcal{F}}
\newcommand{\Omegamin}{\Omega_\mathrm{min}}
\DeclareMathOperator{\trace}{tr}
\DeclareMathOperator{\Div}{Div}
\newcommand{\ljump}{\llbracket}
\newcommand{\rjump}{\rrbracket}
\title{\bf Shape Optimization Using the\\ Cut Finite Element Method\thanks{This research was supported in part by the Swedish Foundation for Strategic Research Grant No.\ AM13-0029, the Swedish Research Council Grants Nos.\ 2011-4992, 2013-4708, the Swedish Research Programme Essence, and EPSRC, UK, Grant Nr. EP/J002313/1.}}
\author{Erik Burman\footnote{Department of Mathematics, University 
 College London, Gower Street, London WC1E 6BT, UK, e.burman@ucl.ac.uk}
 \mbox{ } 
Daniel Elfverson\footnote{Department of Mathematics and Mathematical Statistics, Ume{\aa} University, SE--901~87~~Ume{\aa}, Sweden, daniel.elfverson@umu.se}
 \mbox{ }
 Peter Hansbo\footnote{Department of Mechanical Engineering, J\"onk\"oping University, SE-551~11 J\"onk\"oping, Sweden, peter.hansbo@ju.se } 
\\
 \mbox{ } 
{Mats G. Larson}\footnote{Department of Mathematics and Mathematical Statistics, Ume{\aa} University, SE--901~87~~Ume{\aa}, Sweden, mats.larson@umu.se}
\mbox{ } 
{Karl Larsson}\footnote{Department of Mathematics and Mathematical Statistics, Ume{\aa} University, SE--901~87~~Ume{\aa}, Sweden, karl.larsson@umu.se}
}
\begin{document}
\maketitle
\begin{abstract}
We present a cut finite element method for shape optimization 
in the case of linear elasticity. The elastic domain is defined by 
a level-set function, and the  evolution of the domain is obtained by moving the level-set along a velocity field using a transport 
equation. The velocity field is the largest decreasing direction of the shape derivative that satisfies a certain  regularity requirement 
and the computation of the shape derivative is based on a volume formulation. Using the cut finite element method no re--meshing 
is required when updating the domain and we may also use higher 
order finite 
element approximations. To obtain a stable method, stabilization terms are added in the vicinity of the cut elements at the boundary, which provides control of the variation of the solution in the vicinity of the boundary. We implement and illustrate the performance of the method 
in the two--dimensional case, considering both triangular and quadrilateral meshes as well as finite element spaces of different 
order.
\end{abstract}

\clearpage
\tableofcontents
\clearpage

\section{Introduction} 


Optimization of elastic structures is an important and active research field  
of significant interest in engineering. There are two common approaches to 
represent the domain which we seek to optimize: (i) \emph{A density
  function.} This  approach, common in topology optimizaton
\cite{BeSi03, ChKl09}, is very general and computationally convenient, but the boundary representation is not sharp and thus typically fine grids and low order approximation spaces are employed. (ii) \emph{An implicit or explicit representation 
of the boundary.} This approach is common in shape optimization \cite{SZ92} where the  boundary is typically described by a level-set function or a parametrization, but topological changes can also be handled for instance using 
an implicit level-set representation of the boundary. Given the boundary representation  we need to generate a discretization of the domain when it is updated. This can be done using a standard meshing approach based on mesh motion and/or re-meshing or alternatively using a fictitious domain method, see \cite{AJT04,Sewi00,WaWa03} for different approaches.



In this contribution  we focus on the fictitious domain approach using the recently developed cut finite element method CutFEM \cite{BH12,BCHLM15}, extending 
our previous work  on the Bernoulli free boundary value problem \cite{BernoulliReport} to linear elasticity. The key components in CutFEM are: (i) Use of a fixed background mesh and a sharp boundary representation that is allowed to cut through the background mesh 
in arbitrary fashion. (ii) Weak enforcement of the boundary conditions. (iii)  Stabilization of the cut elements in the vicinity of the boundary using a 
consistent stabilization term which leads to optimal order accuracy and conditioning of the resulting algebraic system of equations. CutFEM also 
allows higher order finite element spaces and rests on a solid theoretical foundation including stability bounds, optimal order a priori error bounds, 
and optimal order bounds for the condition numbers of the stiffness and 
mass matrices, see \cite{BCHLM15} and the references therein. 

In order to support large changes in the shape and topology of the domain 
during the optimization process we employ a level-set representation of the boundary. The evolution of the domain is obtained by moving the level-set 
along a velocity field using a Hamilton-Jacobi  transport equation, see   \cite{ADF14, AJT04}. The velocity field is the largest decreasing direction 
of the shape derivative that satisfies a certain regularity requirement together 
with a boundary conditions on the boundary of the design volume. The
computation of the shape derivative is based on a volume formulation,
see \cite{HP15, HPS15} for similar approaches. In this context CutFEM provides 
an accurate and stable approximation of the linear elasticity  equations which completely avoids the use of standard meshing procedures when updating the domain. In this contribution we focus on standard Lagrange elements, but a wide 
range of elements may be used in CutFEM, including discontinuous elements, isogeometric elements with higher order regularity, and mixed elements. 

When using a higher order finite element space in shape optimization we use 
a finer representation of the domain than the computational mesh, i.e., the level-set representing the geometry is defined on a finer mesh. For computational convenience and efficiency we use a  piecewise linear geometry description 
on the finer grid. This approach of course leads to loss of optimal order but the main purpose of using the finer grid is to allow the domain to move more freely on the refined grid despite using larger higher order elements for the approximation of the solution field. If necessary, once a steady design has been found, a more accurate final computation can be done which uses a higher order geometry description. Our approach leads to a convenient and efficient implementation 
since the solution to the elasticity equations and the level-set function are represented using the fixed background mesh and a uniform refinement thereof.

An outline of the paper is as follows: in Section 2 we formulate the equations of 
linear elasticity and the optimization problem, in Section 3 we formulate the CutFEM, in Section 4 we recall the necessary results from shape calculus, in 
Section 5 we formulate the transport equation for the level-set and the 
optimization algorithm, and finally in Section 6 we present numerical results.

\label{sec:introduction}

\section{Model Problem} 
\label{sec:model_problem}
\subsection{Linear Elasticity}
Let $\Omega\in\mathbb{R}^d$, for $d=2,3$, be a bounded domain 
with boundary $\partial\Omega=\Gamma_D\cup\Gamma_N$ such that 
$\Gamma_D \cap \Gamma_N = \emptyset$, and let $n$ be the exterior 
unit normal to $\partial\Omega$. Assuming a linear elastic isotropic 
material the constitutive relationship between the symmetric stress 
tensor $\sigma$ and the strain tensor $\epsilon$ is given 
by Hooke's law
\begin{equation}
	\sigma = 2\mu\epsilon + \lambda\trace(\epsilon)I
\end{equation}
where $\mu$ and $\lambda$ are the Lam{\' e} parameters and $I$ is 
the $d\times d$ identity matrix. Also, assuming small strains we may use the 
linear strain tensor $\epsilon(u)=(\nabla u + \nabla u^T)/2$, 
where $u$ is the displacement field, as a strain measure. In this expression for $\epsilon(u)$ we take the gradient of vector fields which we define as $\nabla u = u\otimes\nabla$, i.e. the Jacobian of $u$. Under 
these assumptions the governing equations for the stress $\sigma$ 
and displacements $u$ of an elastic body in equilibrium are
\begin{alignat}{3}
    -\Div \sigma &= f&\qquad & \text{in }\Omega \label{eq:elasticity-a}\\
    \sigma &= 2\mu\epsilon(u) + \lambda\trace(\epsilon(u))I&\qquad& \text{in }\Omega \label{eq:elasticity-b}\\
    u &= 0 & \qquad & \text{on }\Gamma_D \label{eq:elasticity-c}\\
    \sigma\cdot n &= g &\qquad & \text{on }\Gamma_N \label{eq:elasticity-d}
\end{alignat}
where $\Div\sigma$ is the row-wise divergence on $\sigma$ and $f$ and $g$ are given body and surface force densities, respectively.

In shape optimization we seek to minimize some objective functional $J(\Omega;u(\Omega))$ with respect to the domain $\Omega$ where $u(\Omega)$ 
is the displacement field obtained by solving \eqref{eq:elasticity-a}--\eqref{eq:elasticity-d}. For the present work we choose to minimize the 
compliance, i.e. the internal energy of the elastic body, and for clarity of presentation we also choose $f=0$.

\paragraph{Weak Form.}
The weak form of the problem
\eqref{eq:elasticity-a}--\eqref{eq:elasticity-d} reads: find $u\in V(\Omega)=\{u\in[H^1(\Omega)]^d : u|_{\Gamma_D}=0\}$
such that
\begin{equation}\label{eq:primal}
	a(\Omega;u,q) = L(q)\qquad\forall q \in V(\Omega)
\end{equation}
with bilinear form $a(\Omega;\cdot,\cdot)$ and linear functional $L(\cdot)$ given by
\begin{align}
	a(\Omega;v,q) &= 2\mu(\epsilon(v),\epsilon(q))_{L^2(\Omega)} + \lambda(\nabla\cdot v,\nabla\cdot q)_{L^2(\Omega)} \\
	L(q) & = (g,q)_{L^2(\Gamma_N)}
\end{align}
where $(\cdot,\cdot)_{L^2(\Omega)}$ denotes the usual $L^2(\Omega)$ inner product.
Specifically, for tensors $A,B:\Omega\rightarrow\IR^{d\times d}$ the $L^2(\Omega)$ inner product is
$(A,B)_{L^2(\Omega)} = \int_\Omega A:B$ where
the contraction operator ":"  is defined $A:B = \sum_{i,j=1}^d A_{ij}B_{ij}$ and we recall that $\trace(\epsilon(v))=I:\epsilon(v)=\nabla\cdot v$.
Note that by choosing $f=0$  and $\Gamma_N$ fixed the linear functional $L$ is independent of $\Omega$.

\subsection{Minimization Problem}
Given some functional $J(\Omega;v)$ where $\Omega\subset\Omega_0$ is a domain and $v\in V(\Omega)$ we define the objective functional
\begin{equation} \label{eq:general-objective}
J(\Omega) = J(\Omega;u(\Omega))
\end{equation}
where $u(\Omega)\in V(\Omega)$ is the solution to \eqref{eq:primal}.
Letting $\mcO$ denote the set of all admissible domains we formulate the 
minimization problem: find $\Omegamin\in\mcO$ such that
\begin{align}\label{eq:minimization}
	J(\Omegamin) = \min_{\Omega\in\mcO} J(\Omega)
\end{align}

\paragraph{Lagrangian Formulation.}
The minimization problem can also be expressed as finding an extreme point
$(\Omegamin;u(\Omegamin),p(\Omegamin))$ to the Lagrangian
\begin{equation}\label{eq:lagrangian}
	\mcL(\Omega;v,q) =  J(\Omega;v) - a(\Omega;v,q)+L(q)
\end{equation}
and we note that $J(\Omega;v)=\mcL(\Omega;v,q)$ for all $q\in V$ and $\Omega\in\mcO$ by \eqref{eq:primal}.
For a fixed domain $\Omega$, the saddle point $(u,p)$ fulfilling
\begin{align}
\mcL(\Omega;u,p) = \adjustlimits \inf_{v\in V} \sup_{q\in V} \mcL(\Omega;v,q)
\label{eq:saddlepoint}
\end{align}
can be determined by the problems: find 
$u\in V(\Omega)$ and $p\in V(\Omega)$ such that
\begin{align}
	\langle\partial_q\mcL(\Omega;u,p),\delta q\rangle &= 0\qquad\forall \delta q \in V(\Omega)
	\label{eq:dLdq}
	\\
	\langle\partial_v\mcL(\Omega;u,p),\delta v\rangle &= 0\qquad\forall \delta v \in V(\Omega)
	\label{eq:dLdv}
\end{align}
where $\langle\partial_q\mcL(\Omega,u,p),\delta q\rangle$ denotes the partial derivative of $\mcL$ with respect to $q$ in the direction of $\delta q$, cf. \cite{AJT04}.
From \eqref{eq:dLdq} we can derive the primal problem \eqref{eq:primal}; thus $u$ is the solution to \eqref{eq:primal}, and
from \eqref{eq:dLdv} we deduce that $p$ is the solution to the following dual problem: find $p\in V(\Omega)$ such that
\begin{equation}\label{eq:dual}
	 a(\Omega;v,p) = \langle\partial_v J(\Omega,u),v\rangle\quad\forall v \in V(\Omega)
\end{equation}

Recalling that $J(\Omega;v) = \mcL(\Omega;v,q)$ we can thus express the objective functional \eqref{eq:general-objective} in terms of the Lagrangian as
\begin{equation}\label{eq:J-inf-sup}
J(\Omega) = \mcL(\Omega;u(\Omega),p(\Omega))
= \adjustlimits \inf_{v\in V(\Omega)} \sup_{q\in V(\Omega)} \mcL(\Omega;v,q) 
\end{equation}
where $u(\Omega)\in V(\Omega)$ and $p(\Omega)\in V(\Omega)$ are the solutions to
\begin{alignat}{2}
a(\Omega;u(\Omega),v) &=L(v)\qquad &&\forall v\in V(\Omega)
\\
a(\Omega;v,p(\Omega)) &= \langle\partial_v J(\Omega;u(\Omega)),v\rangle\qquad &&\forall v \in V(\Omega)
\end{alignat}
i.e., the solutions to the primal and dual problem, respectively.

\paragraph{Objective Functional.}
In the present work we choose to minimize the compliance, i.e. the elastic energy, which is given by
\begin{align}
\frac{1}{2}a(\Omega;u(\Omega),u(\Omega))
\end{align}
where $u(\Omega)$ is the solution to \eqref{eq:primal}.
Since we also want to constrain the volume $|\Omega|$
we form the functional $J(\cdot;\cdot)$ as the sum of the compliance and a
penalty term
\begin{equation}
	J(\Omega;v) = \frac{1}{2}a(\Omega;v,v) + \kappa |\Omega|
\label{eq:compliance-J}
\end{equation}
where $\kappa$ acts as a material cost which we in the present work keep fixed.
For this choice of objective functional the dual problem \eqref{eq:dual} coincides with the primal problem \eqref{eq:primal} since $\langle\partial_v J(\Omega;u),v\rangle=L(v)$ and the bilinear form is symmetric, and hence $p=u$.
\begin{remark}
If a fixed amount of material is desired, i.e., $|\Omega|=\gamma|\Omega_0|$ for
some fixed $0<\gamma<1$, it is possible to determine $\kappa$ by some adaptive
update strategy of the material cost $\kappa$.
\end{remark}

\begin{remark}\label{rem:dual}
	When considering other functionals $J(\Omega;v)$,
	typically $u\neq p$ and the dual problem \eqref{eq:dual} must be computed.
\end{remark}


\section{Cut Finite Element Method} 
\label{sec:CutFEM}
We will use a cut finite element method to solve the primal (and dual) equation \eqref{eq:primal}. An analysis of this method for linear elasticity is presented in \cite{cutfem-elasticity}.

\subsection{The Mesh and Finite Element Spaces}
Consider a fixed polygonal domain $\Omega_0$, with $\Omega\subset\Omega_0$ as illustrated in Figure~\ref{fig:mesh1-domain}, and let 
\begin{equation}
	\K_{h,0}=\{K\}\quad\text{and}\quad\F_{h,0}=\{F\}
\end{equation}
denote a subdivision of $\Omega_0$ into a family of quasi-uniform triangles/tetrahedrons
or a uniform quadrilaterals/bricks with mesh parameter $h \in (0,h_0]$, illustrated in Figure~\ref{fig:mesh2-grid}, respectively the set 
of interior faces in $\K_{h,0}$. Let $P_k$ be the space of full polynomials in $\mathbb{R}^d$ up to degree $k$ on triangular/tetrahedral elements and tensor 
products of polynomials up to degree $k$ on quadrilateral/brick elements. We 
define the finite element space of continuous piecewise polynomial functions on $\K_{h,0}$ by
\begin{equation}
V_{h,k,0}=\{v\in V(\Omega_0) \cap [C^0(\Omega_0)]^d \,:\, v|_K\in [P_k(K)]^d \, , \, K\in\K_{h,0}\}
\end{equation}
The active part of the mesh is given by all elements in $\K_{h,0}$ which has a non-zero intersection
with the domain $\Omega$.
We define the active mesh and its interior faces
\begin{equation}
	\K_h = \{K\in \K_{h,0}:  \overline{K} \cap\Omega \neq \emptyset \}\quad\text{and}\quad \F_h = \{F\in \F_{h,0}:  F\cap\Omega \neq \emptyset\}
\end{equation}
the union of all active elements
\begin{equation}
	\Omega_h = \cup_{K\in\K_h} K
\end{equation}
illustrated in Figure~\ref{fig:mesh3-active},
and the finite element space on the active mesh
\begin{equation}
	V_{h,k}(\Omega_h)=V_{h,k,0}|_{\Omega_h}
\end{equation}
We also define the sets of all elements that are cut by $\Gamma_D$ and $\Gamma_N$, respectively
\begin{align}
	\Omega_{h,D} &= \{\cup K: \overline{K} \cap \Gamma_D \neq\emptyset,\, K\in \K_h\} 
	\\
	\Omega_{h,N} &= \{\cup K: \overline{K} \cap \Gamma_N \neq\emptyset,\, K\in \K_h\} 
\end{align}
and the sets of interior faces belonging to elements in $\K_h$ that are cut by $\Gamma_D$ respectively cut by $\Gamma_N$ but not by $\Gamma_D$
\begin{align}
\F_{h,D} &= \{F\in \F_{h}:  F\cap\overline{\Omega}_{h,D} \neq \emptyset\}
\\
\F_{h,N} &= \{F\in \F_{h}\setminus\F_{h,D} :  F\cap\overline{\Omega}_{h,N} \neq \emptyset\}
\end{align}
which are illustrated in Figure~\ref{fig:mesh4-stab}. Note that $\F_{h,D}\cap\F_{h,N}=\emptyset$.
For each face $F \in \F_h$ we choose to denote one of the two elements sharing $F$ by $K_+$ and the other element by $K_-$. We thus have $F=\overline{K}_+\cap\overline{K}_-$ and we define the face normal $n_F= n_{\partial K_+}$ and the jump over the face
\begin{align}
\ljump v \rjump &= v|_{K_+} - v|_{K_-}
\end{align}

\begin{figure}
\centering
\begin{subfigure}[b]{0.35\textwidth}\centering
\includegraphics[width=0.87\linewidth]{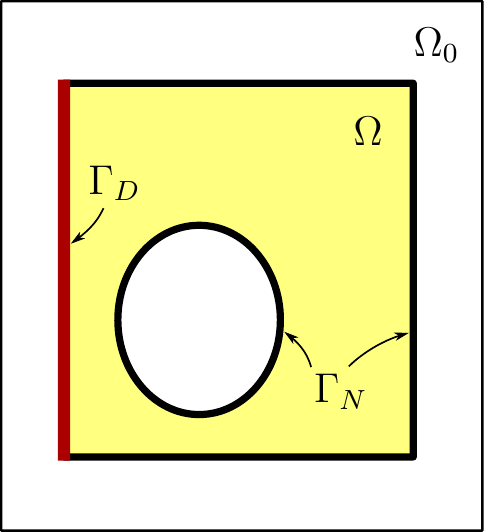}
\caption{Domain and boundaries}
\label{fig:mesh1-domain}
\end{subfigure}
\begin{subfigure}[b]{0.35\textwidth}\centering
\includegraphics[width=0.87\linewidth]{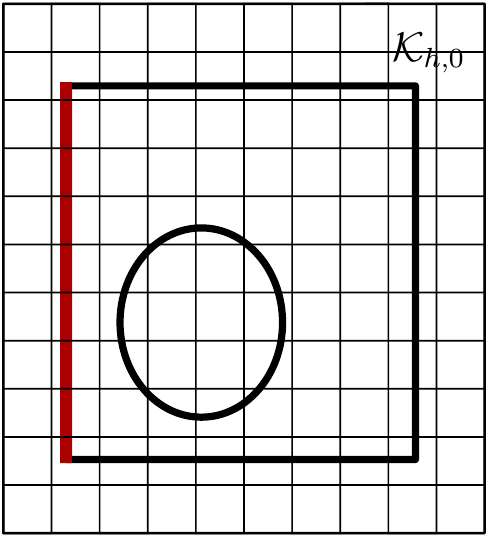}
\caption{Background grid}
\label{fig:mesh2-grid}
\end{subfigure}

\vspace{1.0em}

\begin{subfigure}[b]{0.35\textwidth}\centering
\includegraphics[width=0.87\linewidth]{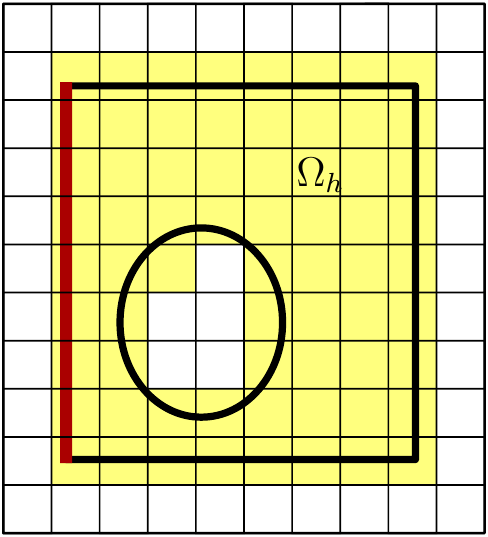}
\caption{Domain of active elements $\K_h$}
\label{fig:mesh3-active}
\end{subfigure}
\begin{subfigure}[b]{0.35\textwidth}\centering
\includegraphics[width=0.87\linewidth]{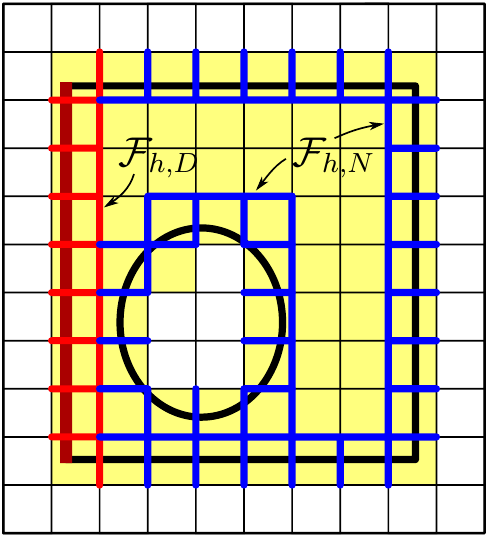}
\caption{Sets of faces}
\label{fig:mesh4-stab}
\end{subfigure}
\caption{Illustration of notation for meshes and faces.}
\label{fig:meshes}
\end{figure}

\subsection{The Method}
Following the procedure in \cite{cutfem-elasticity} we
introduce the stabilized bilinear form
\begin{align}
	a_h(\Omega;u,v) &= a(\Omega;u,v) + s_h(\F_{h,D};u,v) + h^2 s_h(\F_{h,N};u,v)
\end{align}
The stabilization form $s_h$ is given by
\begin{equation}\label{eq:stab}
s_h(\F;u,v) = \sum_{F \in \F} 
\sum_{j=1}^k \gamma_j h^{2j-1} (\ljump \partial_{n_F}^j u \rjump,\ljump \partial_{n_F}^j v \rjump)_{L^2(F)}
\end{equation}
where $\partial_{n_F}^j$ denotes the $j$:th derivative in the direction of the face normal $n_F$ and $\{\gamma_j\}_{j=1}^k$ are positive parameters.
We also introduce the stabilized Nitsche form
\begin{align}\label{eq:nitschesform}
	A_h(\Omega;u,v) &= a_h(\Omega;u,v)
-(\sigma(u)\cdot n ,v)_{L^2(\Gamma_{D})} -(u, \sigma(v)\cdot n)_{L^2(\Gamma_{D})}
\\ \nonumber &\qquad
+ \gamma_D h^{-1} \left(
2\mu (u,v)_{L^2(\Gamma_{D})}
+ \lambda (u\cdot n,v\cdot n)_{L^2(\Gamma_{D})}
\right)
\end{align}
where the additional terms give the weak enforcement of the Dirichlet boundary conditions via Nitsche's method \cite{Nitsche71} and $\gamma_D>0$ is a parameter.

The cut finite element method for linear elasticity can now be formulated as the following problem: find $u_h \in V_{h,k}(\Omega_h)$ such that
\begin{equation} \label{eq:method}
	A_h(\Omega;u_h,v) = L(v)\qquad\forall v\in V_{h,k}(\Omega_h)
\end{equation}

\paragraph{Theoretical Results.}
To summarize the the main theoretical results from \cite{cutfem-elasticity} the cut finite element method for linear elasticity has the following properties:
\begin{itemize}
\item The stabilized form $a_h$ enjoys the same coercivity and continuity properties with respect to the proper norms on $\Omega_h$ as the standard form $a$ on $\Omega$ equipped with a fitted mesh.
\item Optimal order approximation holds in the relevant norms since there is a  stable interpolation operator with an extension operator that in a $H^s$ stable way extends a function from $\Omega$ to a neighborhood of $\Omega$ containing $\Omega_h$.
\end{itemize}
Using these results a priori error estimates of optimal order can be derived using standard techniques of finite element analysis.


\subsection{Geometry Description} \label{sec:geom}

\begin{figure}
\centering
\begin{subfigure}[b]{0.4\textwidth}\centering
\includegraphics[width=0.30\linewidth]{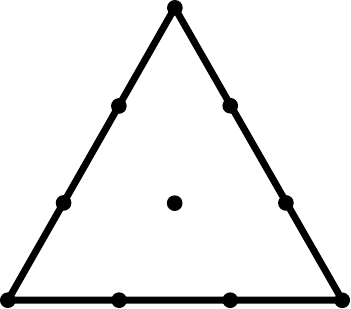}
\qquad
\includegraphics[width=0.30\linewidth]{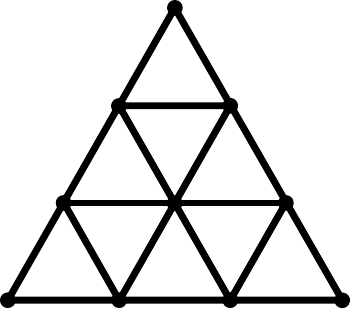}
\caption{Triangles}
\end{subfigure}
\begin{subfigure}[b]{0.4\textwidth}\centering
\includegraphics[width=0.30\linewidth]{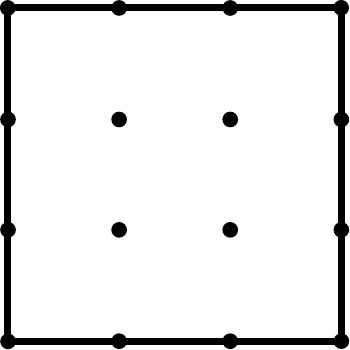}
\qquad
\includegraphics[width=0.30\linewidth]{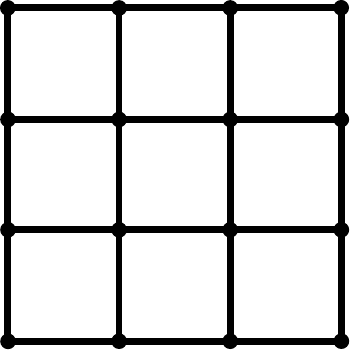}
\caption{Quadrilaterals}
\end{subfigure}
\caption{$P_3$ Lagrange elements and the corresponding mesh for a $P_1$-iso-$P_3$ finite element space consisting of $P_1$ Lagrange elements with vertices at the positions of the $P_3$ Lagrange nodes.}
\label{fig:lagrange-nodes}
\end{figure}

\begin{figure}
\centering
\begin{subfigure}[b]{\textwidth}\centering
\includegraphics[width=0.6\linewidth]{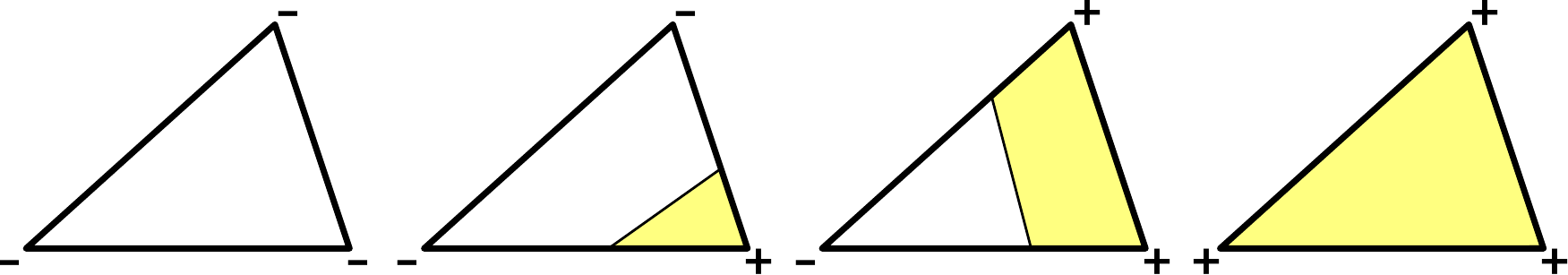}
\caption{Triangles}
\end{subfigure}

\vspace{1em}

\begin{subfigure}[b]{\textwidth}\centering
\includegraphics[width=0.6\linewidth]{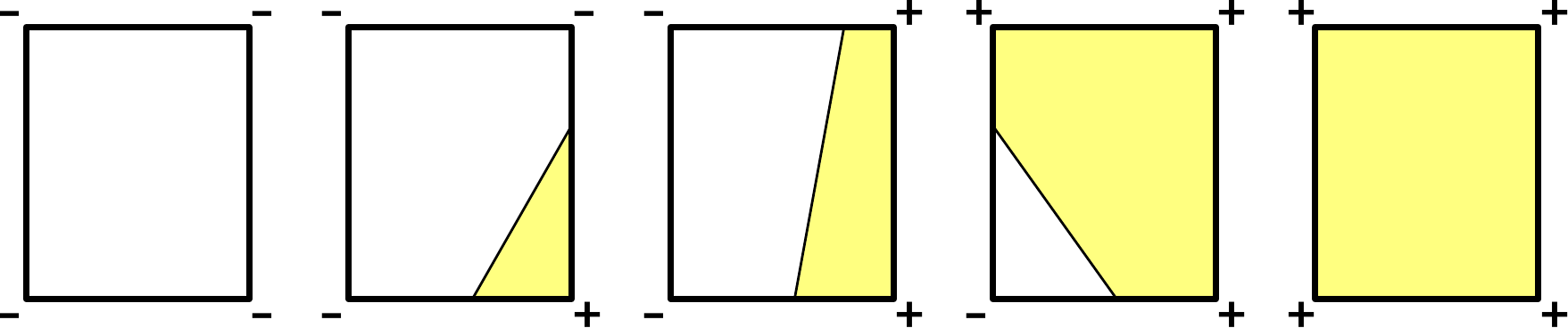}
\caption{Quadrilaterals}
\end{subfigure}
\caption{Cases of extracting piecewise linear geometry from a level-set on an element $K\in\mathcal{K}_{h/k,0}$ with the signs of the level-set at the vertices as indicated.}
\label{fig:level-set-cases}
\end{figure}

\begin{figure}
\centering
\begin{subfigure}[b]{0.4\textwidth}\centering
\includegraphics[width=0.87\linewidth]{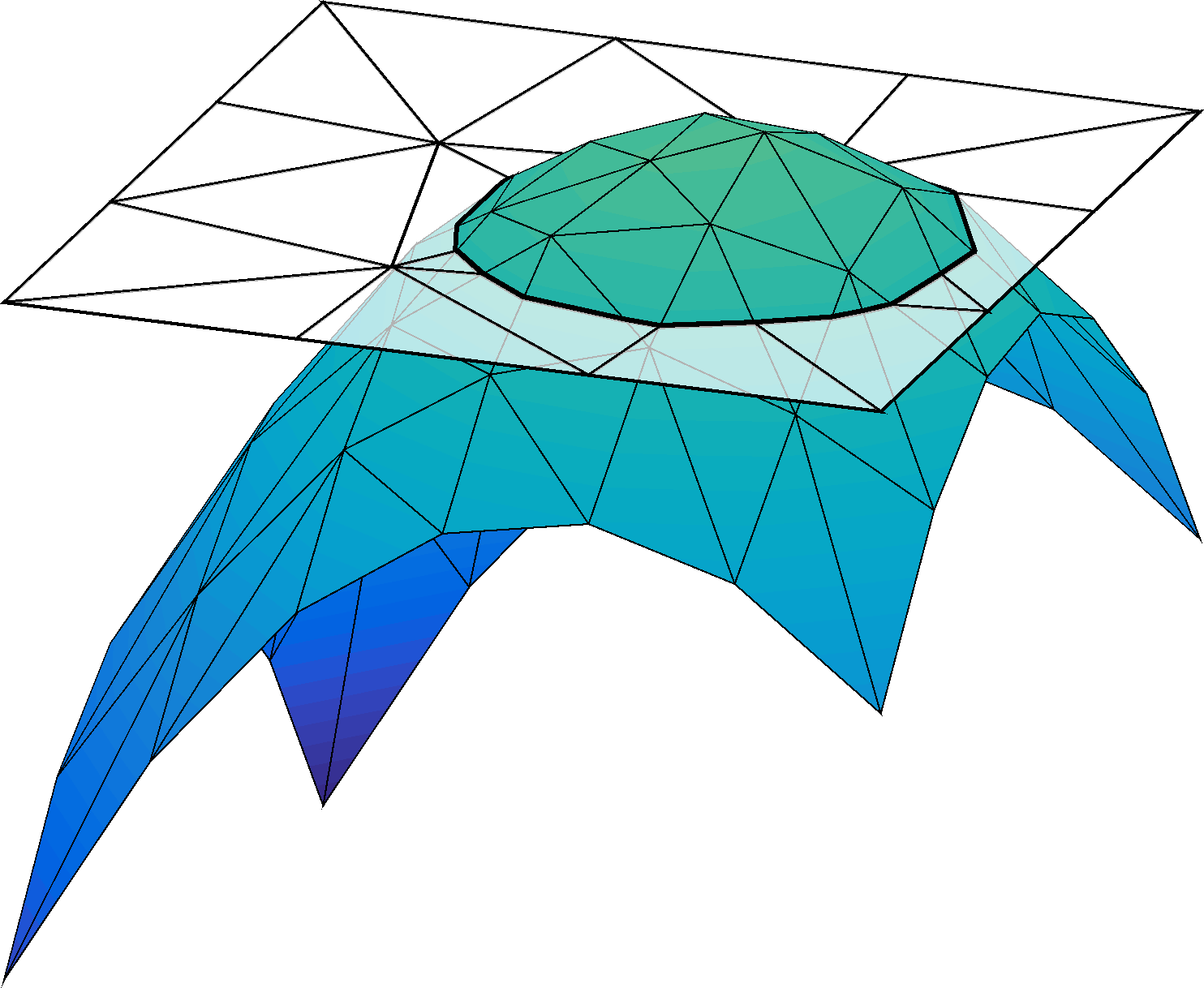}
\caption{$P_1\text{-iso-}P_2$ level-set}
\label{fig:levelset-tri}
\end{subfigure}
\begin{subfigure}[b]{0.30\textwidth}\centering
\includegraphics[width=0.87\linewidth]{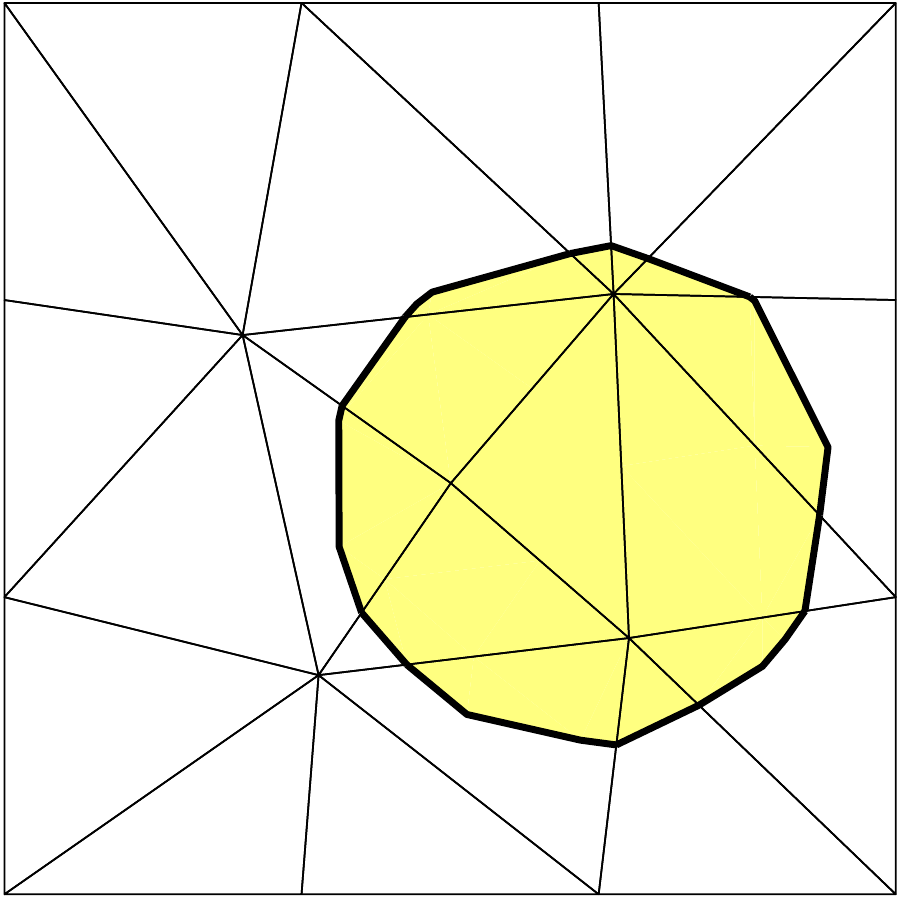}

\vspace{1.2em}

\caption{$P_1\text{-iso-}P_2$ geometry}
\label{fig:levelset-tri-geom}
\end{subfigure}

\vspace{1.0em}

\begin{subfigure}[b]{0.4\textwidth}\centering
\includegraphics[width=0.87\linewidth]{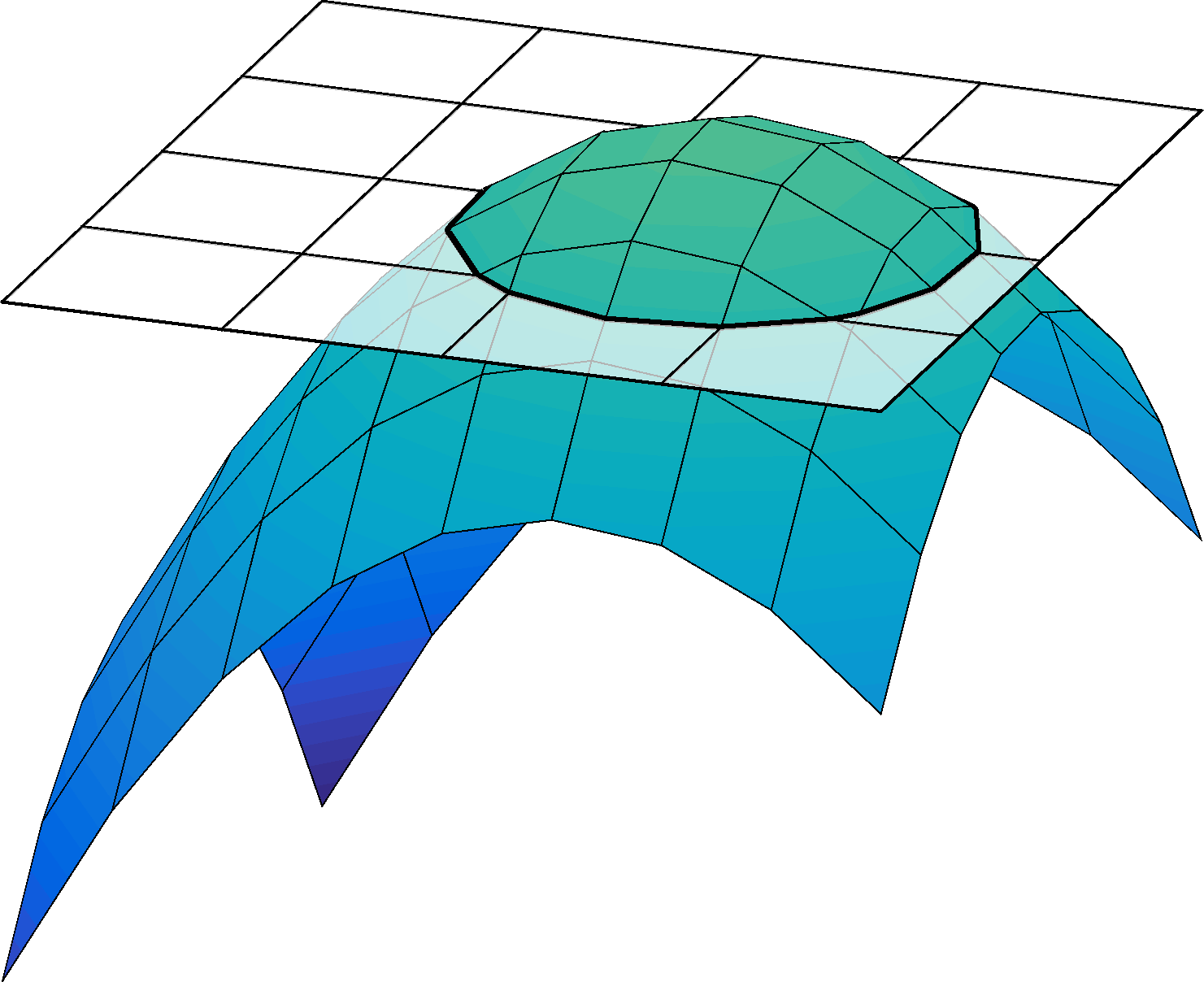}
\caption{$P_1\text{-iso-}P_2$ level-set}
\label{fig:levelset-quad}
\end{subfigure}
\begin{subfigure}[b]{0.30\textwidth}\centering
\includegraphics[width=0.87\linewidth]{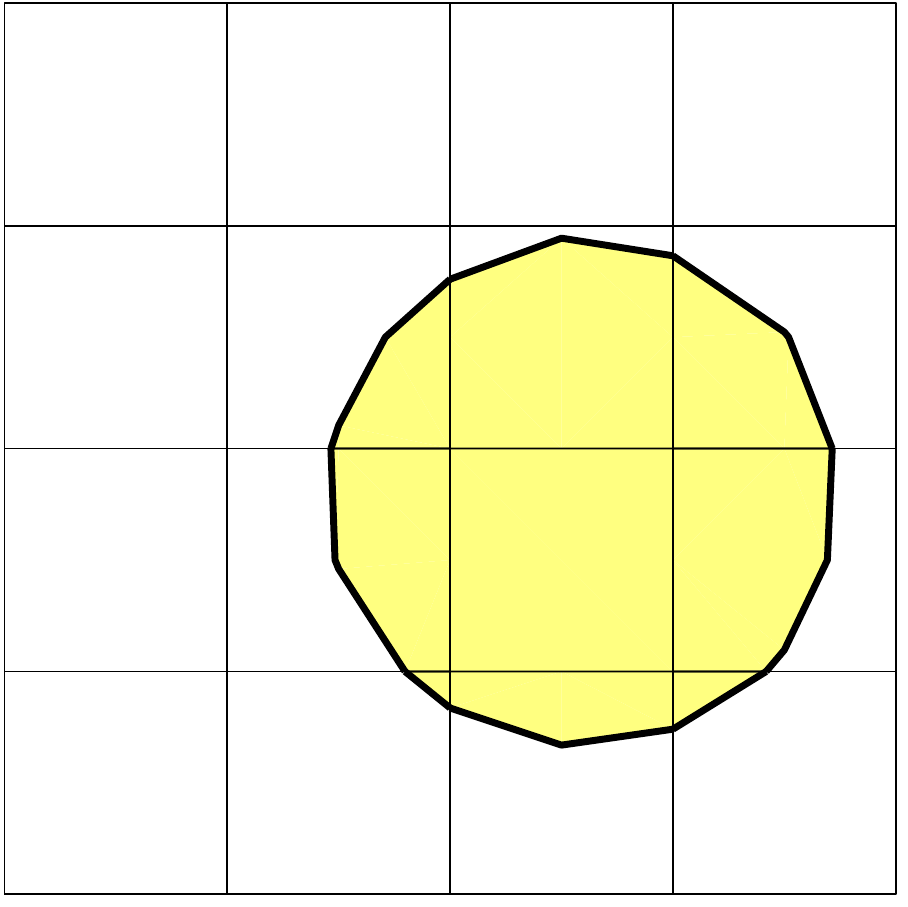}

\vspace{1.2em}

\caption{$P_1\text{-iso-}P_2$ geometry}
\label{fig:levelset-quad-geom}
\end{subfigure}
\caption{Piecewise linear geometry represented by a discrete level-set using $P_1\text{-iso-}P_k$ finite elements.}
\label{fig:levelset-geom}
\end{figure}

Let the geometry be described via a level-set function $\phi(x):\Omega_0 \rightarrow \mathbb{R}$ where the domain $\Omega$ and the domain boundary $\partial\Omega$ are given by
\begin{equation}
\Omega = \{\, x\in\Omega_0 \, : \, \phi(x)>0 \,\}
\qquad\text{and}\qquad
\partial\Omega = \{\, x\in\overline{\Omega}_0 \, : \, \phi(x)=0 \,\}
\end{equation}
For convenience we use $P_1$ finite elements for the level-set representation. However, as we use $P_k$ finite elements to approximate the solution, we improve the geometry representation when using higher order finite elements ($k\geq 2$) by letting the level-set be defined on a refined mesh.
The refined mesh $\K_{h/k,0}$ is constructed by uniform refinement of $\K_{h,0}$ such that the Lagrange nodes of a $P_k$ finite element in $\K_{h,0}$ coincides with the vertices of $\K_{h/k,0}$, as illustrated in Figure~\ref{fig:lagrange-nodes}.
The finite element space of the level-set is the scalar valued so called $P_1\text{-iso-}P_k$ finite element space on $\K_{h,0}$,
defined as
\begin{align}
W_{h/k}=\{v\in H^1(\Omega_0)\cap C^0(\Omega_0) \,:\,v|_K\in P_1(K),\
 K\in\K_{h/k,0}\}
\end{align}

\paragraph{Geometry Extraction.}
We extract the domain $\Omega$ from the level-set function $\phi\in W_{h/k}$ by traversing all elements in $\K_{h/k,0}$ and checking the value and sign of $\phi$ in the element vertices to derive the intersection between the element and the domain. This procedure results in a number of simple cases which we for triangles and quadrilaterals illustrate in Figure~\ref{fig:level-set-cases}.
Note that in the case of quadrilaterals the boundary intersection $\partial\Omega\cap K$ where $K\in\K_{h/k,0}$ is actually not a linear function as bilinear basis functions include a quadratic cross term, but we employ linear interpolation between detected edge intersections to produce the $P_1$ boundary illustrated.
Example geometry extractions from $P_1$-iso-$P_2$ level-sets on triangles and quadrilaterals are shown in Figure~\ref{fig:levelset-geom}.

\paragraph{Quadrature.} As the possible geometry intersections with elements in the refined mesh $\K_{h/k,0}$ consists of a small number of cases, as illustrated in Figure~\ref{fig:level-set-cases},
we construct quadrature rules for exact integration of products of $P_k$ polynomials for each case.


\section{Shape Calculus} 
\label{sec:shape_derivative}

\subsection{Definition of the Shape Derivative}

\begin{figure}\centering
\qquad\qquad
\includegraphics[width=0.35\linewidth]{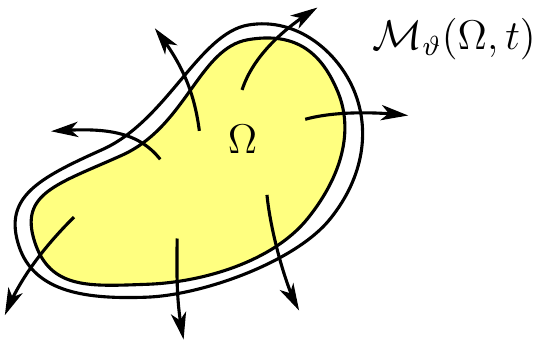}
\caption{Illustration of the mapping $\bndmap_\vartheta(\Omega,t)$.}
\label{fig:shape-derivative}
\end{figure}

For $\Omega \in \mcO$ we let $\mathcal{W}(\Omega,\IR^d)$ denote the space of sufficiently
smooth vector fields on $\Omega$ and for a vector field $\vartheta\in \mathcal{W}$ we define the mapping
\begin{equation}
  \bndmap_\vartheta:\Omega \times I \ni (x,t)  \mapsto x + t \vartheta(x) \in \bndmap_\vartheta(\Omega,t) \subset \IR^d 
\end{equation}
where $I = (-\delta,\delta)$, $\delta>0$. This mapping is illustrated in Figure~\ref{fig:shape-derivative}.
For small enough $\delta$, 
the mapping $\Omega \mapsto \bndmap_\vartheta(\Omega,t)$ is a bijection and $\bndmap_\vartheta(\Omega,0) = \Omega$. We also assume that the vector field 
$\vartheta$ is such that $\bndmap_\vartheta(\Omega,t) \in \mcO$ for $t\in I$ with $\delta$ 
small enough.

Let $J(\Omega)$ be a shape functional, i.e. a mapping $J:\mcO\ni\Omega\mapsto J(\Omega) \in \IR$. We then have the 
composition 
$I \ni t \mapsto J \circ \bndmap(\Omega,t) \in \IR$ and we define the shape 
derivative $\DOV$ of $J$ in the direction $\vartheta$ by
\begin{equation}
    \DOV J(\Omega) 
    = 
    \frac{d}{dt} J\circ \bndmap_\vartheta(\Omega,t)|_{t=0}
    =
    \lim_{t \to 0}\frac{J(\bndmap_\vartheta(\Omega,t))-J(\Omega)}{t}
\end{equation}
Note that if  $\bndmap_\vartheta(\Omega,t) = \Omega$ we have  $\DOV J = 0$,
even if $\bndmap_\vartheta(\omega,t) \neq \omega$ for a proper subspace
$\omega\subset\Omega$. We finally define the shape derivative $D_\Omega J:
W(\Omega,\IR^d) \rightarrow \IR$ by
\begin{equation}
 \DJv  = \DOV J(\Omega)
\end{equation}
If the functional $J$ depends on other arguments we use $\partial_\Omega$ to
denote the partial derivative with respect to $\Omega$ and $\POV$ to denote
the partial derivative with respect to $\Omega$ in the direction $\vartheta$.

For  $f:\Omega \times I  \rightarrow \IR^d$ we also define the material time 
derivative in the direction $\vartheta$ by
  \begin{equation}
         \DtV f =  \lim_{t\to 0}\frac{f(\bndmap_\vartheta(x,t),t)-f(x,0)}{t}
  \end{equation}
and recall the identity
\begin{equation}\label{eq:shape-id}
	    \POV\int_{\Omega} f\dx = \int_\Omega \DtV f + (\nabla\cdot \vartheta)f
\end{equation}
see for example \cite{BernoulliReport}.
	
\subsection{Shape Derivative}
We want to take the derivative of the inf-sup formulation of the objective
function \eqref{eq:J-inf-sup} with respect to the domain. The Correa--Seeger
theorem \cite{CS85} or \cite{DeZo11} states that for $q,v\in V(\Omega)$ we
have
\begin{equation}
	D_\Omega J(\vartheta) = \POV \mcL(\Omega; q, v)|_{v=u(\Omega),q=p(\Omega)}
\end{equation}
where  $u(\Omega)$ is the primal solution
\eqref{eq:primal}, and $p(\Omega)$ is the dual solution
\eqref{eq:dual}. For the special case when $u(\Omega)=p(\Omega)$ we have
\begin{equation}
	D_\Omega J(\vartheta) = \POV \mcL(\Omega; v, v)|_{v=u(\Omega)}
\end{equation} For notational simplicity we below omit the dependency on
$\Omega$, i.e., $u=u(\Omega)$ and $p=p(\Omega)$.

\begin{theorem}
	The shape derivative of the compliance objective functional $J(\Omega)=\mcL(\Omega;u,u)$ defined via \eqref{eq:compliance-J} is given by
	\begin{align}\label{eq:DmcL}
		\DOV J(\vartheta)
		&=  
		 \int_\Omega 2\mu(\epsilon_\vartheta(u):\epsilon(u)) + \lambda\mathrm{tr}(\epsilon_\vartheta(u))\mathrm{tr}(\epsilon(u))
		 \\ \nonumber & \qquad
		+ (\nabla\cdot\vartheta)
		\Big(\kappa-\mu(\epsilon(u):\epsilon(u)) - \frac{1}{2}\lambda\trace(\epsilon(u))^2 \Big) \notag 
	\end{align}
	where $\epsilon_\vartheta(u)$ is defined as
	\begin{equation}
		\epsilon_\vartheta(u) = \frac{1}{2}\left(\nabla u \nabla\vartheta + \nabla\vartheta^T \nabla u^T\right)
	\end{equation}
\end{theorem}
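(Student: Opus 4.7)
The plan is to invoke the Correa--Seeger identity stated just above the theorem. Since the primal and dual solutions coincide for the compliance functional ($p=u$), the shape derivative reduces to $\DJv = \POV \mcL(\Omega;u,u)$, that is, to the partial shape derivative of the Lagrangian with the arguments held fixed. Combining \eqref{eq:lagrangian} and \eqref{eq:compliance-J} and cancelling gives
\[
\mcL(\Omega;u,u) = -\tfrac12 a(\Omega;u,u) + \kappa|\Omega| + L(u),
\]
splitting the task into three pieces. Two are immediate: $\POV L(u)=0$ since $L(q)=(g,q)_{L^2(\Gamma_N)}$ is supported on the fixed boundary $\Gamma_N$, and $\POV|\Omega| = \int_\Omega \divv\vartheta$ follows from \eqref{eq:shape-id} with $f=1$. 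The real work is in $\POV a(\Omega;u,u)$.

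For the bilinear form I would apply \eqref{eq:shape-id} to each of its two $L^2$ integrals, reducing the problem to computing the material time derivatives $\DtV[\epsilon(u):\epsilon(u)]$ and $\DtV[(\divv u)^2]$. To do so I would transport $u$ from $\Omega$ to $\bndmap_\vartheta(\Omega,t)$ by the pull-back $u_t := u\circ \bndmap_\vartheta(\cdot,t)^{-1}$, so that the chain rule gives $\nabla u_t(\bndmap_\vartheta(x,t)) = \nabla u(x)\,(I+t\DV(x))^{-1}$. Differentiating at $t=0$ yields $\partial_t[\nabla u_t \circ \bndmap_\vartheta]|_{t=0} = -\nabla u\,\DV$, and on symmetrising and tracing this produces
\[
\DtV \epsilon(u) = -\epsilon_\vartheta(u), \qquad \DtV(\divv u) = -\trace(\epsilon_\vartheta(u)).
\]
A Leibniz rule applied to the quadratic expressions then yields the required material derivatives, each picking up a factor $-2$ from the symmetry of the products.

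Assembling the pieces, the factor $-\tfrac12$ in front of $a(\Omega;u,u)$ in $\mcL$ combines with the pull-back factor $-2$ to reconstruct the two positive integrands $2\mu\,\epsilon_\vartheta(u):\epsilon(u)$ and $\lambda\,\trace(\epsilon_\vartheta(u))\trace(\epsilon(u))$ appearing in \eqref{eq:DmcL}, while the ``$(\divv\vartheta)f$'' part of \eqref{eq:shape-id}, together with the $\kappa|\Omega|$ contribution, delivers the remaining $(\divv\vartheta)\bigl(\kappa-\mu\,\epsilon(u):\epsilon(u)-\tfrac12\lambda\,\trace(\epsilon(u))^2\bigr)$ term. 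The main obstacle I expect is the pull-back step itself: making precise the sense in which $u$ is ``fixed'' under $\POV$ (the Correa--Seeger framework justifies this automatically via the chosen transport of test functions), and carefully differentiating the inverse Jacobian $(I+t\DV)^{-1}$ at $t=0$ without sign errors. Once one recognises that the symmetrised expression $\tfrac12(\nabla u\,\DV + \DV^T\nabla u^T)$ arising there is by definition $\epsilon_\vartheta(u)$, the remainder of the proof is direct substitution into \eqref{eq:shape-id}.
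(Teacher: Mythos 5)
Your proposal is correct and follows essentially the same route as the paper's proof: reduce to $\POV\mcL(\Omega;u,u)$ via Correa--Seeger with $p=u$, apply the transport identity \eqref{eq:shape-id}, and compute $\DtV\epsilon(u)=-\epsilon_\vartheta(u)$ by pulling back through $\bndmap_\vartheta$ and differentiating the inverse Jacobian $(\nabla\bndmap_t)^{-1}=(I+t\nabla\vartheta)^{-1}$ at $t=0$. The only difference is cosmetic: you make explicit the vanishing of $\POV L(u)$ and the $\kappa|\Omega|$ contribution, which the paper handles implicitly.
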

\begin{proof}
We have
\begin{align}
\POV \mcL(\Omega;v,v) &= \POV \left(\kappa|\Omega|-\frac{1}{2}a(\Omega;v,v)\right)
\end{align}
from \eqref{eq:lagrangian} and \eqref{eq:compliance-J} and by \eqref{eq:shape-id}
we obtain
\begin{align}
\POV \mcL\left(\kappa|\Omega|-\frac{1}{2}a(v,v)\right) &= \POV \int_\Omega  \kappa-\mu \epsilon(v):\epsilon(v) - \frac{1}{2}\lambda\trace(\epsilon(v))^2
\\&=
\int_\Omega \DtV \left( -\mu \epsilon(v):\epsilon(v) - \frac{1}{2}\lambda\trace(\epsilon(v))^2 \right)
\\&\qquad\nonumber
+ (\nabla\cdot \vartheta) \left(\kappa -\mu \epsilon(v):\epsilon(v) - \frac{1}{2}\lambda\trace(\epsilon(v))^2 \right)
\end{align}
where we used $\DtV\kappa=0$.
We introduce the compact notation $\bndmap_t=\bndmap_\vartheta(\cdot,t)$ for the mapping and $\Omega_t=\bndmap_t\Omega=\bndmap_\vartheta(\Omega,t)$ for the perturbed domain.
Letting $\nabla_t$ denote the gradient on $\Omega_t$ we have the identity
\begin{align}
\nabla_t (v \circ \bndmap_t^{-1})\circ \bndmap_t &= \nabla v (\nabla\bndmap_t)^{-1}
\end{align}
and thus $\epsilon(v)$ can be parametrized by $t$ as
\begin{align}
\epsilon_t(v)
&=
\frac{1}{2}\left(\nabla_t (v \circ \bndmap_t^{-1}) + (v \circ \bndmap_t^{-1})^T \nabla_t v ^T \right) \circ \bndmap_t
\\&=
\frac{1}{2}\left(\nabla v (\nabla\bndmap_t)^{-1} + (\nabla\bndmap_t)^{-T}\nabla v^T\right)
\end{align}
Using that
\begin{align}
&\DtV \nabla v  (\nabla\bndmap_t)^{-1}
= \nabla v  \DtV (\nabla\bndmap_t)^{-1} = -\nabla v \nabla\vartheta
\end{align}
we obtain
\begin{align}
\DtV \epsilon_t(v)
&= 
\DtV \frac{1}{2}\left(\nabla v \nabla\bndmap^{-1}_t + \nabla\bndmap^{-T}_t\nabla v^T\right) \\
& = - \frac{1}{2}\left(\nabla v \nabla\vartheta + \nabla\vartheta^T\nabla v^T\right) \\
& = - \epsilon_\vartheta(v)
\end{align}
and as a result
\begin{align}
\DtV \left( -\mu \epsilon(v):\epsilon(v) - \frac{1}{2}\lambda\trace(\epsilon(v))^2 \right)
&=
2\mu \epsilon_\vartheta(v):\epsilon(v) + \lambda\trace(\epsilon_\vartheta(v))\trace(\epsilon(v))
\end{align}
which concludes the proof by setting $v=u$.
\end{proof}

\subsection{Finite Element Approximation of the Shape Derivative} 
\label{sub:discrete_shape_derivative}

We compute a discrete approximation to the shape derivative
by inserting the cut finite element approximation $u_h$ to \eqref{eq:primal}, defined by \eqref{eq:method}, into \eqref{eq:DmcL}. This yields the following expression for the shape derivative approximation
\begin{align}\label{eq:discreteDcmL}
	\POV\mcL(\Omega;u_h,u_h)
		&=  
		 \int_\Omega 2\mu(\epsilon_\vartheta(u_h):\epsilon(u_h)) + \lambda\mathrm{tr}(\epsilon_\vartheta(u_h))\mathrm{tr}(\epsilon(u_h)) \\
		&
		\qquad +(\nabla\cdot\vartheta)\Big(\kappa - \mu(\epsilon(u_h):\epsilon(u_h)) - \frac{1}{2}\lambda(\nabla\cdot u_h)(\nabla\cdot u_h)\Big) \notag 
\end{align}

\section{Domain Evolution}
To construct a robust method for evolving the domain we need the discrete level-set function $\phi\in W_{h/k}$ to be a good approximation to a signed distance function, at least close to the boundary, i.e.,
\begin{equation}
	\phi(x) \approx
	\begin{cases}
		\phantom{-}\rho(x,\partial\Omega),\quad x\not\in\Omega \\
		-\rho(x,\partial\Omega),\quad x\in\Omega
	\end{cases}
\end{equation}
where $\rho(x,\partial\Omega)$ is the smallest Euclidean distance between 
the point $x$ and the boundary $\partial\Omega$.
Note that a property of a signed distance function $\phi$ is that $|\nabla\phi|=1$.
In the following Sections we 
consider the reinitialization of $\phi$ to a signed distance function 
and formulate a method to evolve $\phi$ using the transport equation
\begin{equation}
\partial_t \phi + \beta\cdot\nabla\phi = 0
\end{equation}
where $\beta$ is a velocity field which we compute based on the shape derivative.

\subsection{Reinitialization}
We consider so called Elliptic reinitialization to make the level-set to 
resemble a distance function together with a best approximation 
$L^2$ projection on the interface. 
The reinitialization is performed in two steps:
\begin{enumerate}
\item On the subdomain given by elements in $\K_{h/k,0}$ cut
by the boundary, i.e.
\begin{equation}
\Omega_\partial = \{\cup K: \overline{K} \cap \partial\Omega \neq\emptyset,\, K\in \K_{h/k,0}\} \, ,
\end{equation}
we construct the best distance function 
in $L^2$ sense by solving the problem: find $\phi_\partial\in
W_{h/k}|_{\Omega_\partial}$ such that
\begin{equation}
	(\phi_\partial,w)_{L^2(\Omega_\partial)}=(\phi/|\nabla\phi|,w)_{L^2(\Omega_\partial)}\qquad \forall w \in W_{h/k}|_{\Omega_\partial}
\end{equation}

\item On the rest of the domain, i.e. $\Omega_0\setminus\Omega_\partial$, we use an energy minimization
technique where we seek $\phi \in \{v \in W_{h/k} : v|_{\Omega_\partial} = \phi_\partial \}$ which minimizes
\begin{equation}
 \frac{1}{2}(1-|\nabla\phi|,1-|\nabla\phi|)_{L^2(\Omega_0)} 	
\end{equation}
i.e. makes $\phi$ resemble a distance function as closely as possible.
The minimization problem is equivalent to solving the non-linear problem: find $\phi \in \{v \in W_{h/k} : v|_{\Omega_\partial} = \phi_\partial \}$ such that
\begin{equation}
(\nabla \phi, \nabla v)_{L^2(\Omega_0)} = (|\nabla \phi|^{-1}\nabla \phi, \nabla v)_{L^2(\Omega_0)}
\qquad\forall v \in \{v \in W_{h/k} : v|_{\Omega_\partial} = 0 \}
\end{equation}
This can for example be accomplished by using the fixed point iteration
\begin{equation}
	(\nabla \phi^m, \nabla v)_{L^2(\Omega_0)} = (|\nabla
        \phi^{m-1}|^{-1} \nabla \phi^{m-1}, \nabla v)_{L^2(\Omega_0)}
\end{equation}

\end{enumerate}

\subsection{Shape Evolution} 
\label{sec:shape_evolution}

\paragraph{Computing the Velocity Field $\boldsymbol \beta$.}
Consider the bilinear form
\begin{equation}\label{eq:b_h}
	b(\beta,\vartheta) = (\beta,\vartheta)_{L^2(\Omega_0)} + c_1 (\nabla\beta , \nabla\vartheta)_{L^2(\Omega_0)}
\end{equation}
where $c_1 > 0$ is a parameter used for setting the amount of regularization.
We want to find the velocity field $\beta$ that satisfy
\begin{equation}\label{eq:decent}
	\min_{\sqrt{b(\beta,\beta)}=1} D_{\Omega,\beta} J(\Omega)
\end{equation}
This is equivalent to solving: find $\beta'\in [W_{h/k}]^d$ such that
\begin{equation}
	b(\beta',\vartheta) = - \DOV J(\Omega)\qquad\forall \vartheta \in [W_{h/k}]^d
\end{equation}
and set
\begin{equation}
	\beta = \beta'/\sqrt{b(\beta',\beta')}
\end{equation}
see for example \cite{BernoulliReport}.
As boundary conditions on $\beta'$ we use
\begin{equation}
\beta'\cdot n = 0
\quad\text{and}\quad
(I - n \otimes n) (\nabla\beta')n = 0
\qquad\text{on $\partial\Omega_0$}
\end{equation}
where $(I - n \otimes n)$ is the projection onto the tangential plane of the boundary.


\paragraph{Evolving the Level-Set $\boldsymbol\phi$.}
The evolution of the domain $\Omega$ over a pseudo-time step $T$ is computed by solving the following convection 
equation
\begin{align}\label{eq:transport_stab}
	(\partial_t\phi,v)_{L^2(\Omega_0)} +
  (\beta\cdot\nabla\phi,v)_{L^2(\Omega_0)} + c_2\sum_{F\in
  \F_h} h^2 (\llbracket \partial_{n_F} \phi \rrbracket,\llbracket\partial_{n_F} v\rrbracket)_{L^2(F)}&= 0,\quad t\in(0,T]+t_0
\end{align}
where $c_2$ is a stabilization parameter.
For time integration we use a Crank--Nicolson method and the time step $T$ is chosen via the optimization algorithm described in the next section.

\subsection{Optimization Algorithm} 
\label{sec:optimization_algorithm}
In this section we propose an algorithm that solve \eqref{eq:minimization} and
give a overview of the optimization procedure. To find the descent direction of
the boundary we first use sensitivity analysis to compute a discrete shape
derivative \eqref{eq:discreteDcmL}. Next we compute a velocity corresponding
to the greatest descent direction of shape velocity subject to some regularity
constraints \eqref{eq:decent}. Finally, we use use the velocity field to
evolve the domain by moving the level-set function \eqref{eq:transport_stab}.
This procedure is then repeated in the optimization algorithm.

\begin{algorithm}[h!tb]
  \caption{Optimization algorithm}
  \label{alg:opt}
  \begin{algorithmic}[1]
  	\State Compute the velocity field/decent direction $\beta_h$ \label{alg:Start}
  	\If{$J(\Omega_{t+T})<J(\Omega_{t})$}
    \State $T = 2T$
    \Else
    \While{$J(\Omega_{t+T})>J(\Omega_{t})$}
    \State $T = T/2$
    \EndWhile
    \EndIf
    \State $t=t+T$
    \State Reinitialize $\phi_h$ to a distance function on $\Omega_{t+T}$
    \State Go back to \ref{alg:Start}
  \end{algorithmic}
\end{algorithm}


\section{Numerical Results} 
\label{sec:numerical_experiments}

\subsection{Model Problems}

For our numerical experiments we consider two model problems where we perform shape optimization using CutFEM to optimize the following designs with respect to compliance:
\begin{itemize}
\item Cantilever beam under traction load
\item L-shape beam under traction load
\end{itemize}
The design domains, boundary conditions and initial states for both these problems are described in Figure~\ref{fig:model-problems}. We assume that the material is linear elastic isotropic with a Young's modulus $E=10^4$ and a Poisson's ratio $0.3$. The traction load density in both problems is $g=[0,-20]^T$\,N/m. As objective functional we use compliance \eqref{eq:compliance-J} with a material penalty $\kappa=35$.

\begin{figure}\centering
\begin{subfigure}[b]{\textwidth}\centering
\includegraphics[width=0.75\linewidth]{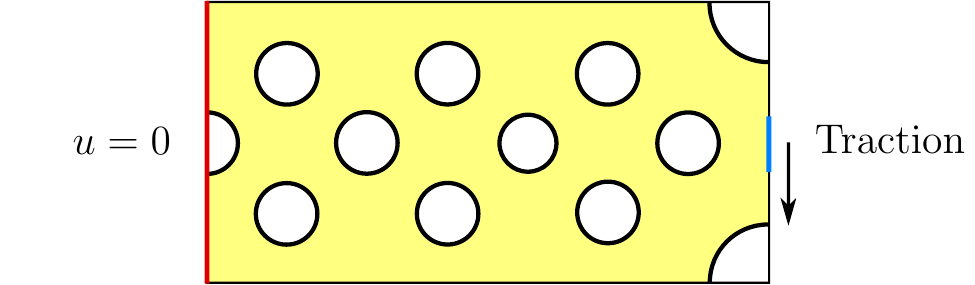}
\caption{Cantilever beam}
\label{fig:beam}
\end{subfigure}

\vspace{1.0em}
\begin{subfigure}[b]{\textwidth}\centering
\includegraphics[width=0.75\linewidth]{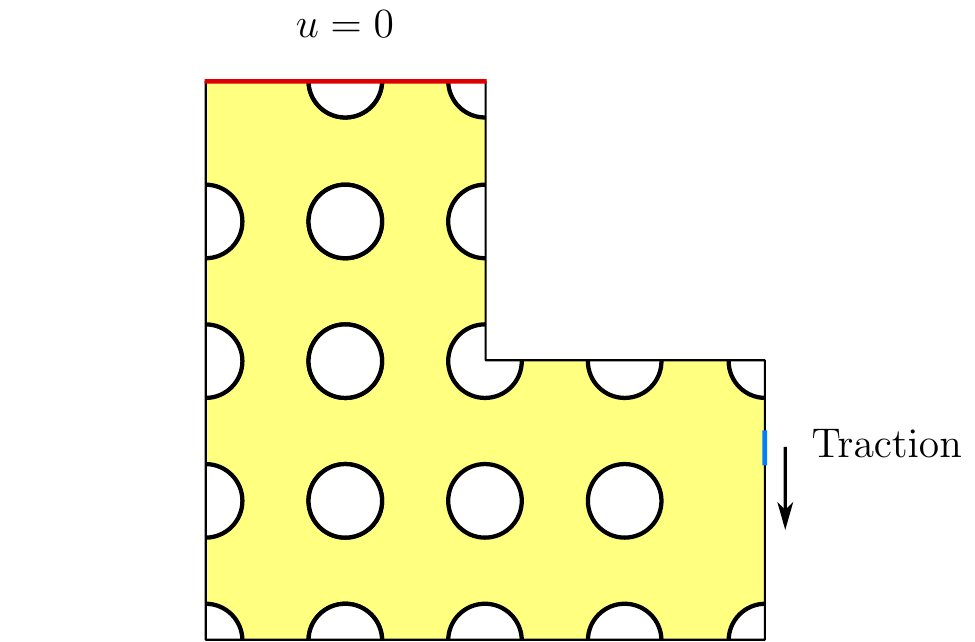}
\caption{L-shape beam}
\label{fig:L-shape}
\end{subfigure}
\caption{Schematic figures of design domains, boundary conditions and initial states used in the two model problems.
(a) \emph{Cantilever beam:} The design domain is a 2\,m $\times$ 1\,m rectangle and the traction load density on the right is evenly distributed within $\pm0.1$\,m from the horizontal center line.
(b) \emph{L-shape beam:} The design domain is a 2\,m $\times$ 2\,m square with a 1\,m $\times$ 1\,m square void in the top right corner. The traction load density on the right is evenly distributed in the interval $[5/16,1/2]$\,m from the bottom.}
\label{fig:model-problems}
\end{figure}

\subsection{Implementation Aspects}

\paragraph{Parameter Choices.}
To produce our numerical results we set $c_1=3(h/k)^2$ in \eqref{eq:b_h} and
$c_2=0.1$ in \eqref{eq:transport_stab}. Note that $h/k$ in the expression for
$c_1$ is the mesh size of the level-set mesh. We set the penalty parameter
$\gamma_D=10k^2(\mu+\lambda)$ in \eqref{eq:nitschesform} where $k$ is the
polynomial degree and $\gamma_j=10^{-7}(\mu+\lambda)$ in \eqref{eq:stab}.

\paragraph{Finite Elements.}
The CutFEM formulation works independently of the type of $H^1(\Omega_0)$ finite element and in the experiments below we use Lagrange elements of different orders $k$ on both triangles and quadrilaterals. However, a limitation with the current level-set reinitialization procedure is that it only works on $P_1$ elements and therefore we, as described in Section~\ref{sec:geom}, use $P_1$-iso-$P_k$ finite elements for the level-set representation to give good enough geometry description when using higher order elements.

\paragraph{Disconnected Domains.}
As the level-set description allow for topological changes it is possible for small parts of the domain
to become disjoint. To remedy this we use a simple filtering strategy to remove these disjoint parts which is based on properties of the direct solver. It is however possible to construct other filtering strategies instead, for example based on the connectivity of the stiffness matrix.

\subsection{Numerical Experiments}

\paragraph{Cantilever Beam.}

For the cantilever beam problem described in Figure~\ref{fig:beam}
we give results on triangles in Figure~\ref{fig:tri-geoms} and on
quadrilaterals Figure~\ref{fig:quad-geoms}. The plotted meshes
are those used for representing the solution $u$ while the mesh resolution for the
level-set description of the geometry is constant in all the examples in each figure.
Note that while the solutions are symmetric around the horizontal midline we do not impose symmetry in the algorithm.
We also give an example of the resulting stresses and displacements in Figure~\ref{fig:stresses}. We perform 50 iterations for each example.

\begin{figure}\centering
\begin{subfigure}[b]{\textwidth}\centering
\includegraphics[width=0.65\linewidth]{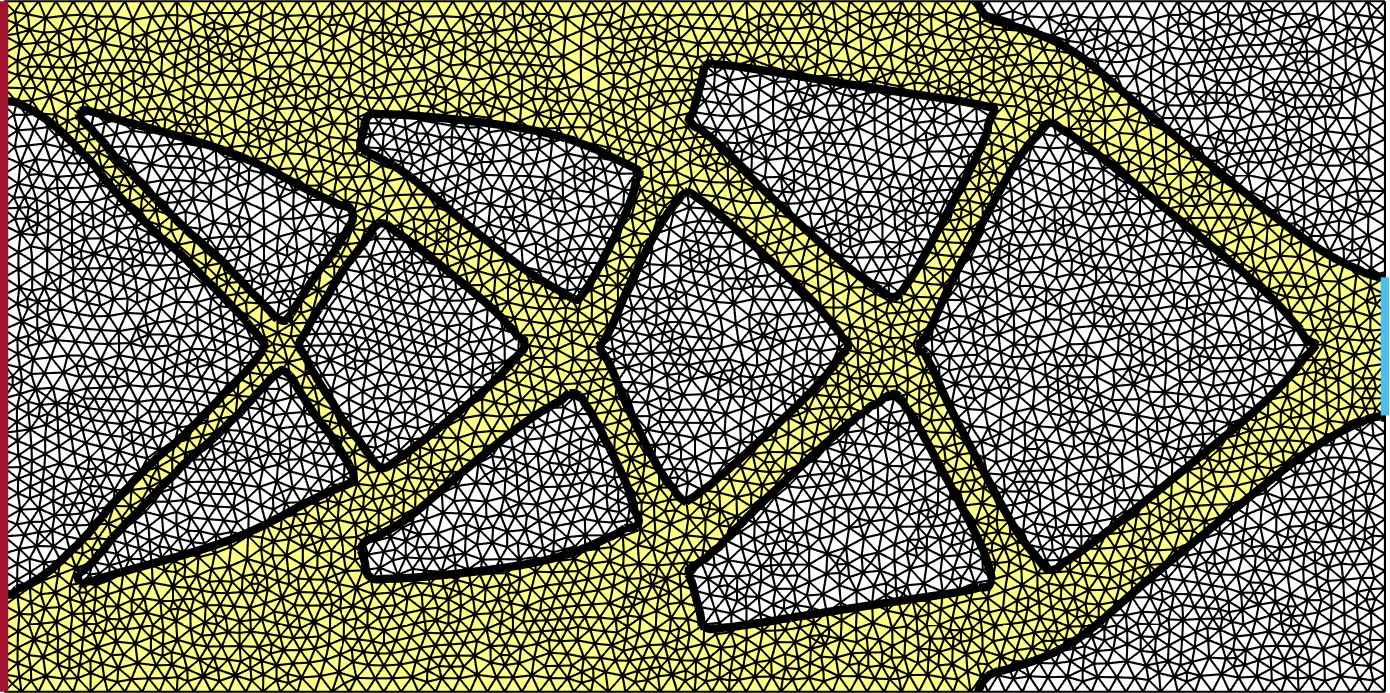}
\caption{$k=2$, $h=0.025$}
\end{subfigure}

\vspace{1em}
\begin{subfigure}[b]{\textwidth}\centering
\includegraphics[width=0.65\linewidth]{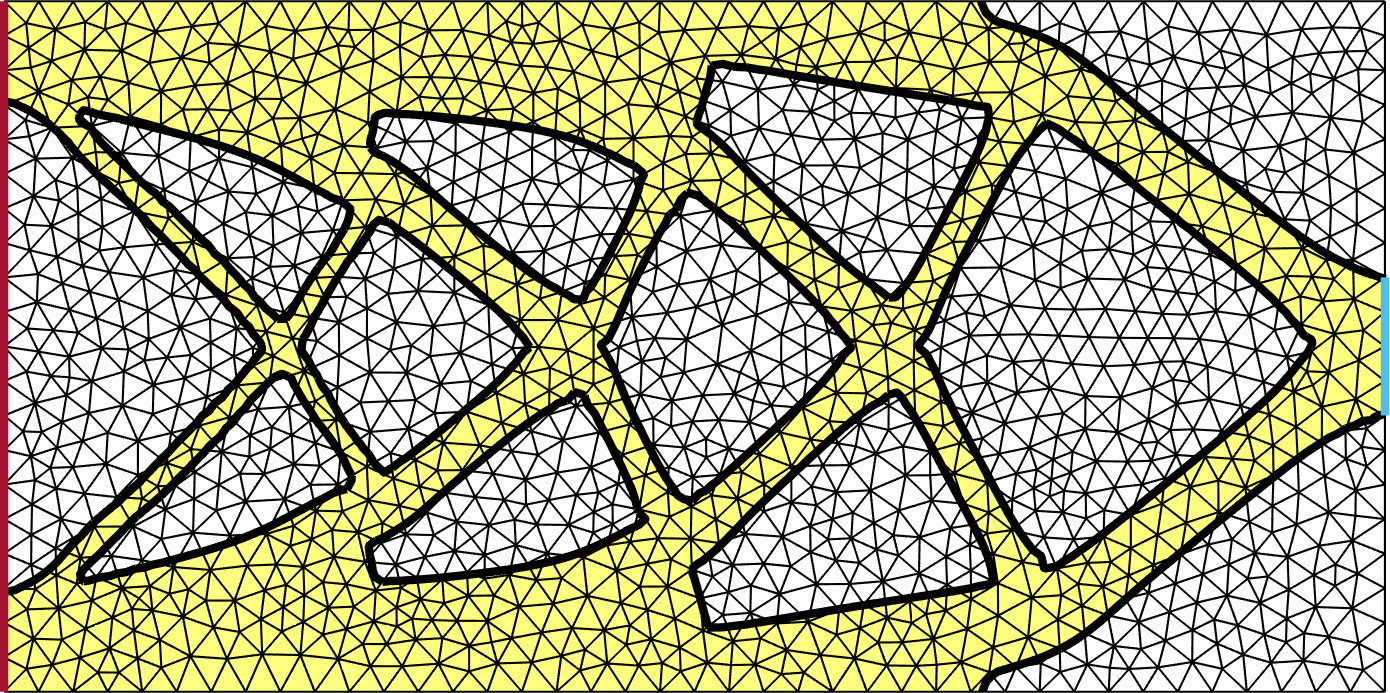}
\caption{$k=4$, $h=0.05$}
\label{fig:geom-tri-P4}
\end{subfigure}

\vspace{1em}
\begin{subfigure}[b]{\textwidth}\centering
\includegraphics[width=0.50\linewidth]{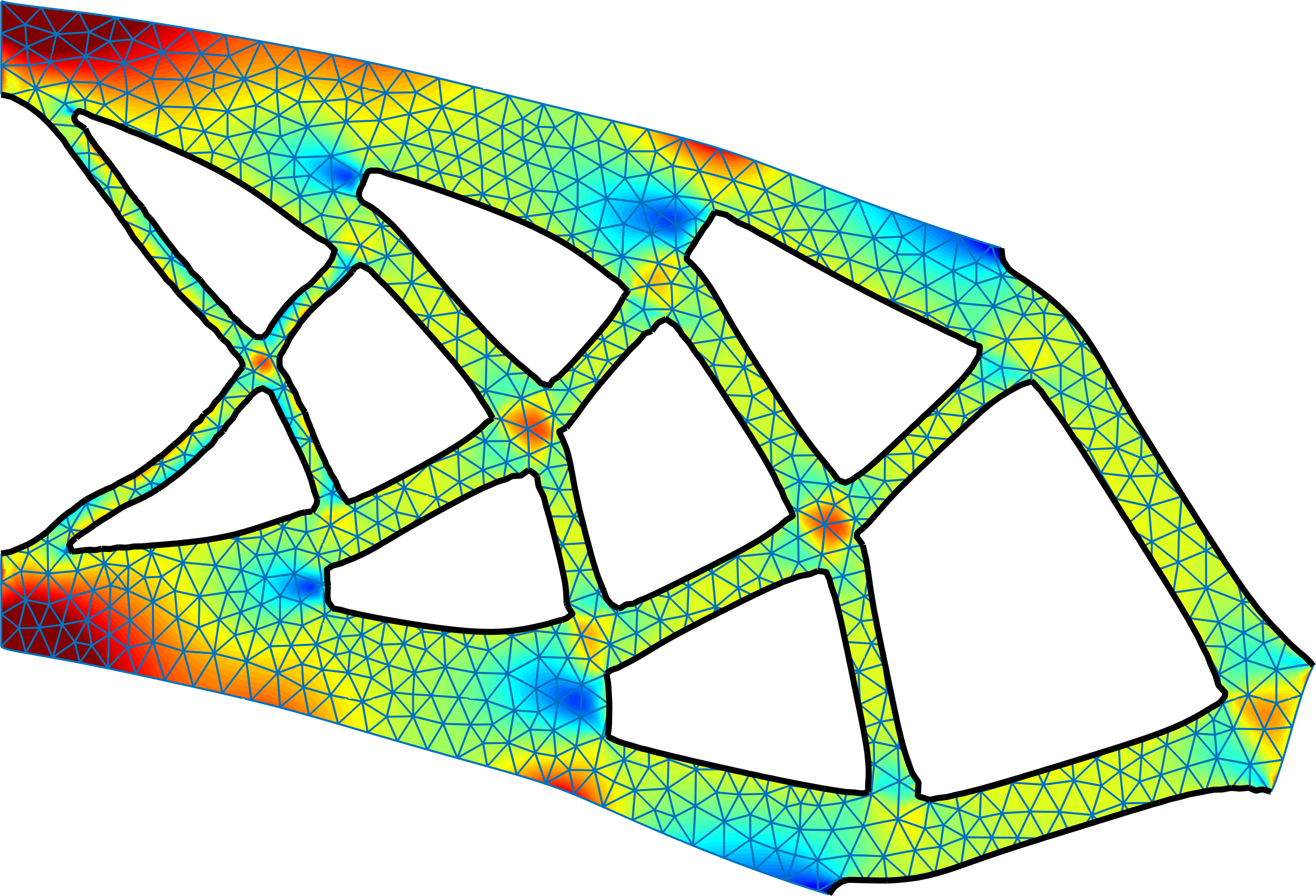}
\caption{$k=4$, $h=0.05$}
\label{fig:stresses}
\end{subfigure}

\caption{
(a)--(b) Final geometries for the cantilever beam problem using triangle elements. The mesh size for the level-set geometry description is $h/k$ and kept constant between the two examples.
(c) Visualization of (exaggerated) displacements and von-Mises stresses when using the final geometry in (b).
}
\label{fig:tri-geoms}
\end{figure}

\begin{figure}\centering
\begin{subfigure}[b]{\textwidth}\centering
\includegraphics[width=0.65\linewidth]{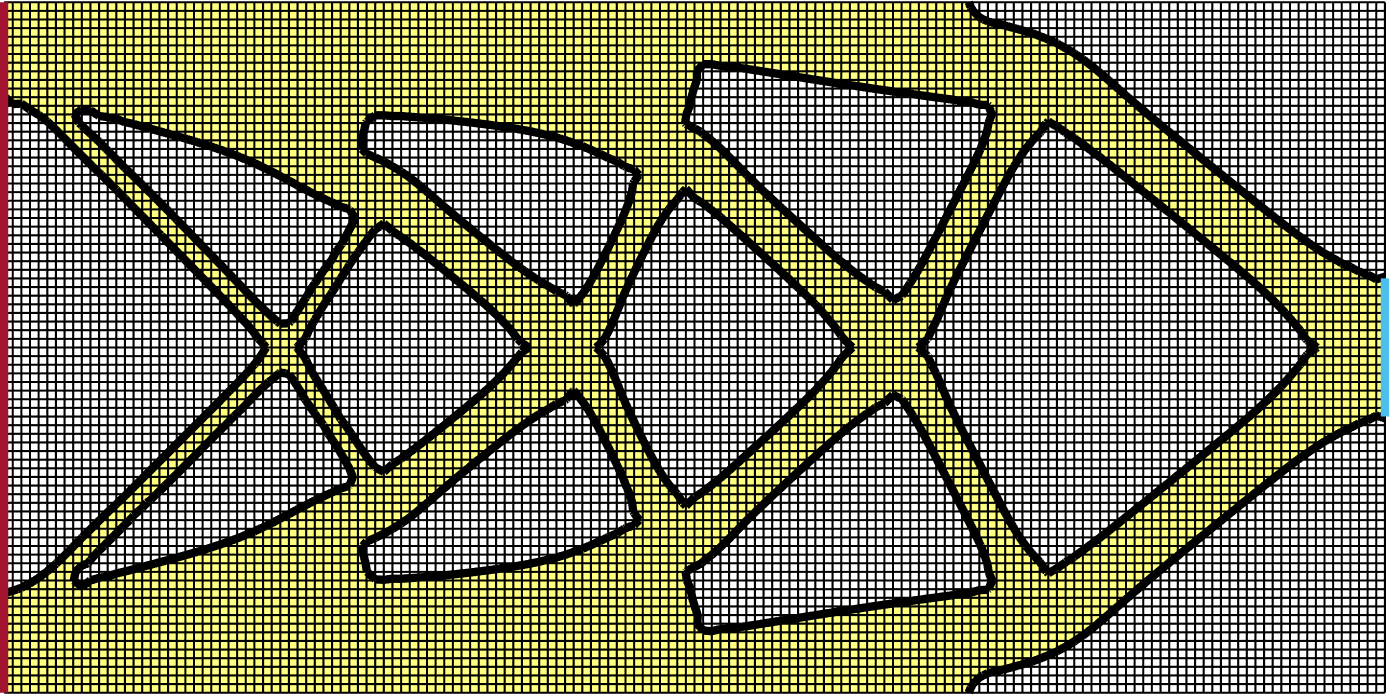}
\caption{$k=1$, $h=0.0125$}
\end{subfigure}

\vspace{1em}
\begin{subfigure}[b]{\textwidth}\centering
\includegraphics[width=0.65\linewidth]{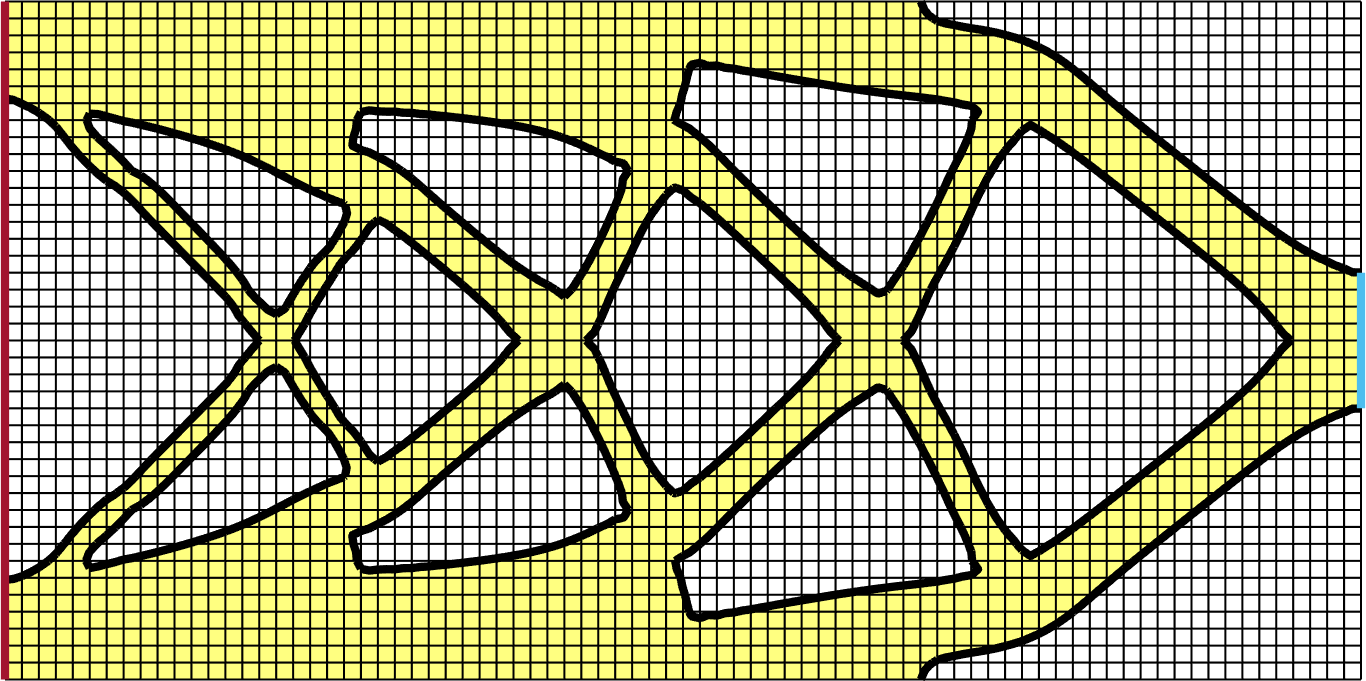}
\caption{$k=2$, $h=0.025$}
\end{subfigure}

\vspace{1em}
\begin{subfigure}[b]{\textwidth}\centering
\includegraphics[width=0.65\linewidth]{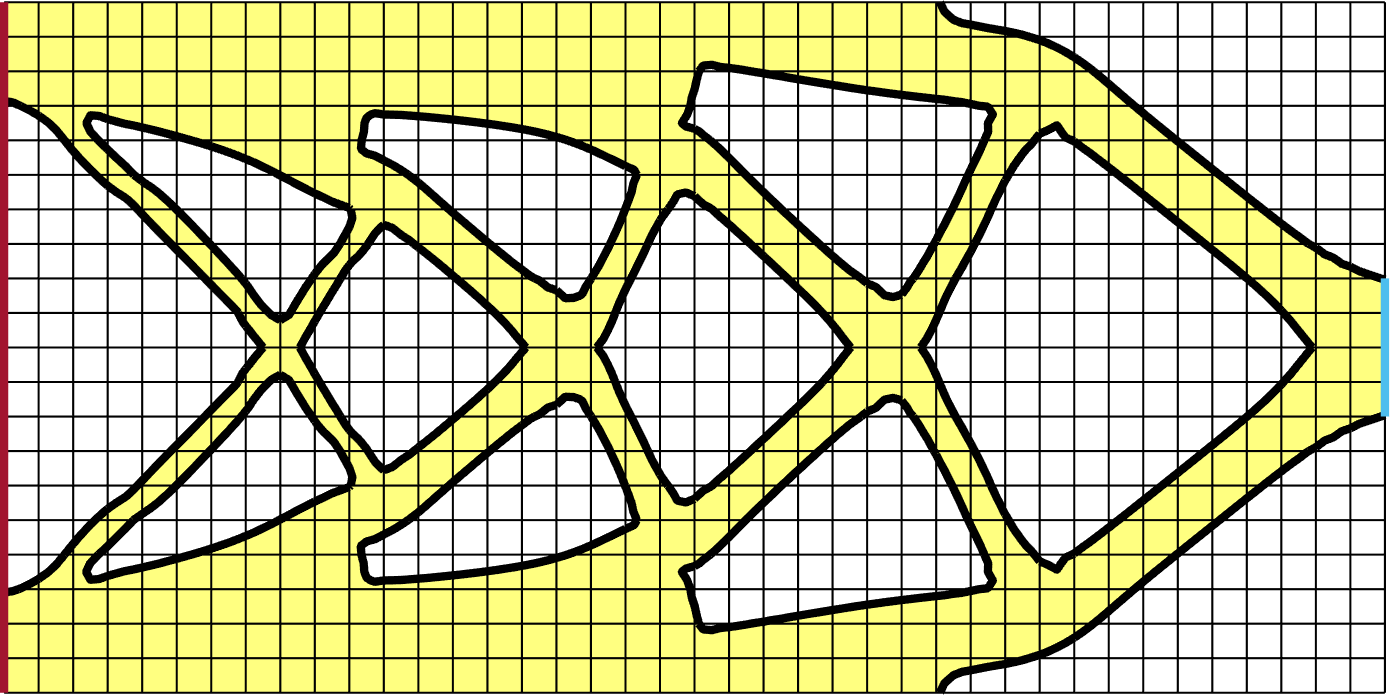}
\caption{$k=4$, $h=0.05$}
\end{subfigure}
\caption{Final geometries for the cantilever beam problem using quadrilateral elements. The mesh size for the level-set geometry description is $h/k$ and kept constant between the three examples.}
\label{fig:quad-geoms}
\end{figure}

\paragraph{L-shape Beam.}
For the L-shape beam problem described in Figure~\ref{fig:L-shape} we give results on quadratic triangles in Figure~\ref{fig:lshape-geom}. Note that in this example we actually have topological changes between the initial and final geometries. We illustrate this in the sequence presented in Figure~\ref{fig:L-shape-sequence}. We perform 50 iterations.

\begin{figure}\centering
\begin{subfigure}[b]{\textwidth}\centering
\includegraphics[width=0.65\linewidth]{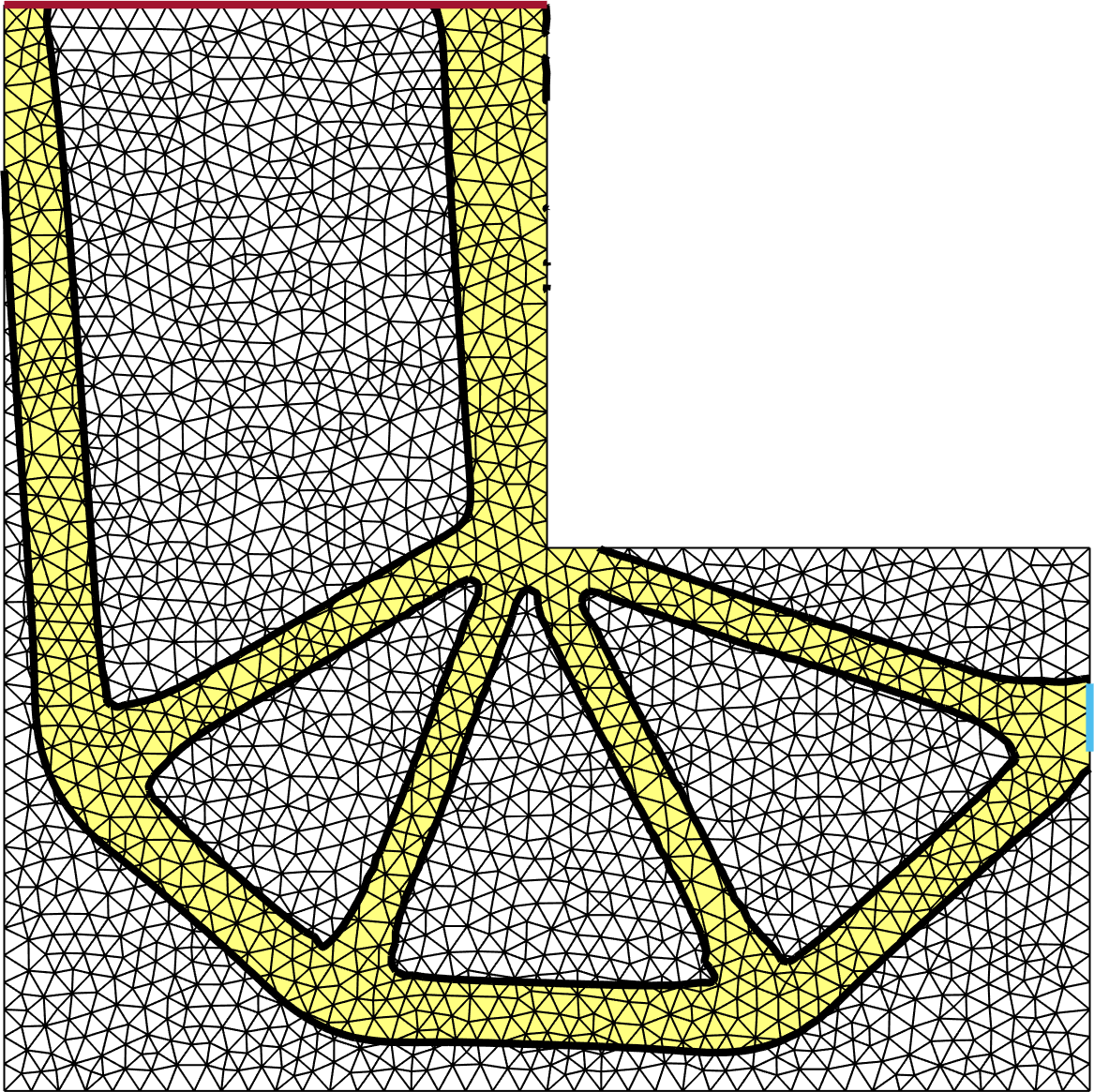}
\caption{$k=2$, $h=0.05$}
\end{subfigure}

\caption{Final geometry for the L-shape beam problem on triangles.}
\label{fig:lshape-geom}
\end{figure}

\def\colRatioOne{0.25}
\def\colRatioTwo{0.9}
\begin{figure}\centering
\begin{subfigure}[b]{\colRatioOne\textwidth}\centering
\includegraphics[width=\colRatioTwo\linewidth]{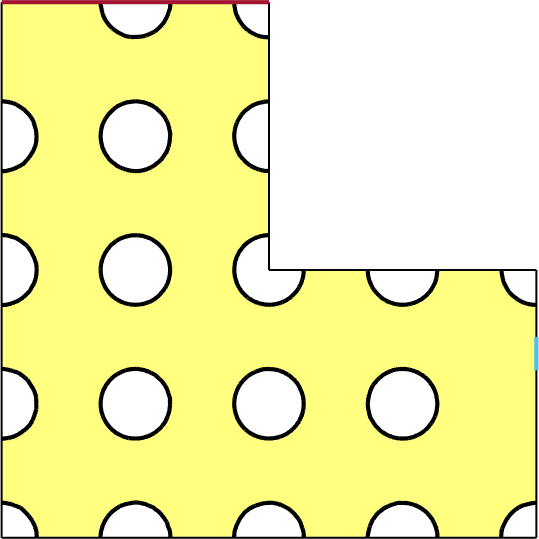}
\caption{Initial state}
\end{subfigure}
\begin{subfigure}[b]{\colRatioOne\textwidth}\centering
\includegraphics[width=\colRatioTwo\linewidth]{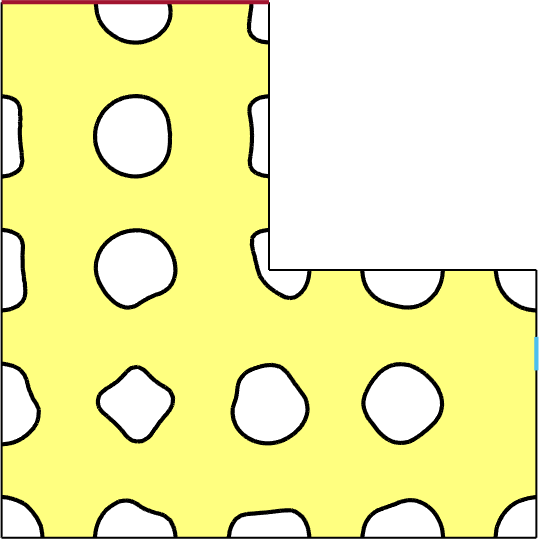}
\caption{Iteration 1}
\end{subfigure}
\begin{subfigure}[b]{\colRatioOne\textwidth}\centering
\includegraphics[width=\colRatioTwo\linewidth]{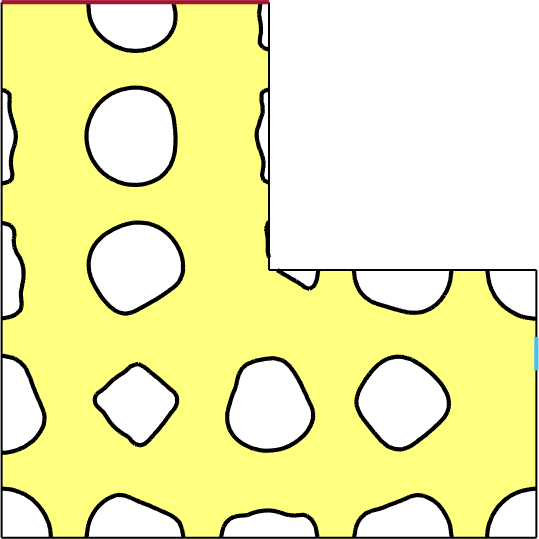}
\caption{Iteration 2}
\end{subfigure}

\vspace{1em}
\begin{subfigure}[b]{\colRatioOne\textwidth}\centering
\includegraphics[width=\colRatioTwo\linewidth]{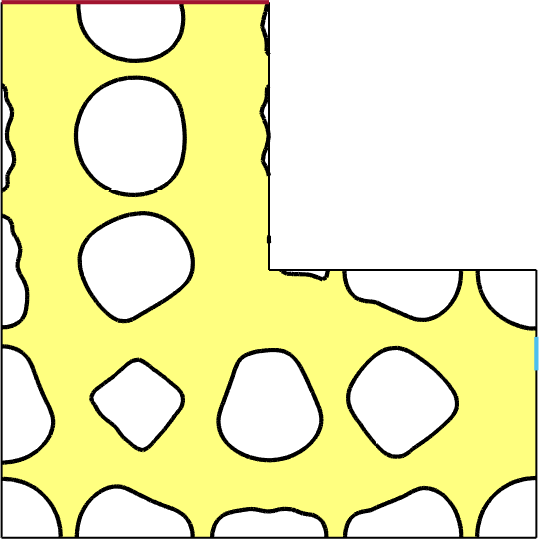}
\caption{Iteration 3}
\end{subfigure}
\begin{subfigure}[b]{\colRatioOne\textwidth}\centering
\includegraphics[width=\colRatioTwo\linewidth]{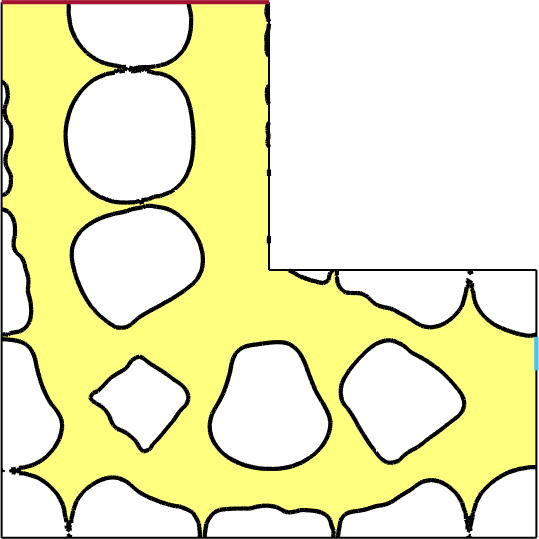}
\caption{Iteration 4}
\end{subfigure}
\begin{subfigure}[b]{\colRatioOne\textwidth}\centering
\includegraphics[width=\colRatioTwo\linewidth]{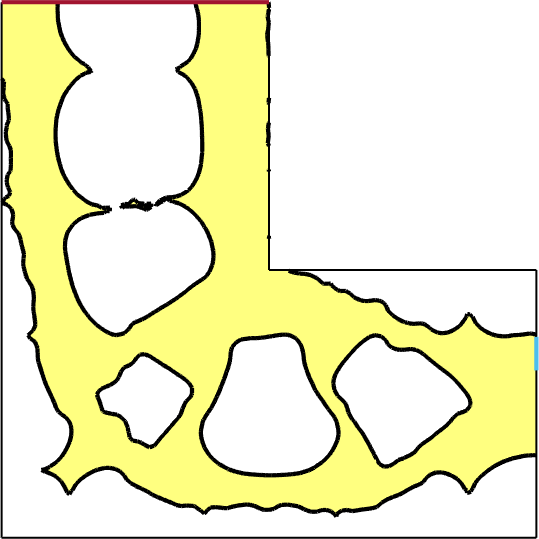}
\caption{Iteration 5}
\end{subfigure}

\vspace{1em}
\begin{subfigure}[b]{\colRatioOne\textwidth}\centering
\includegraphics[width=\colRatioTwo\linewidth]{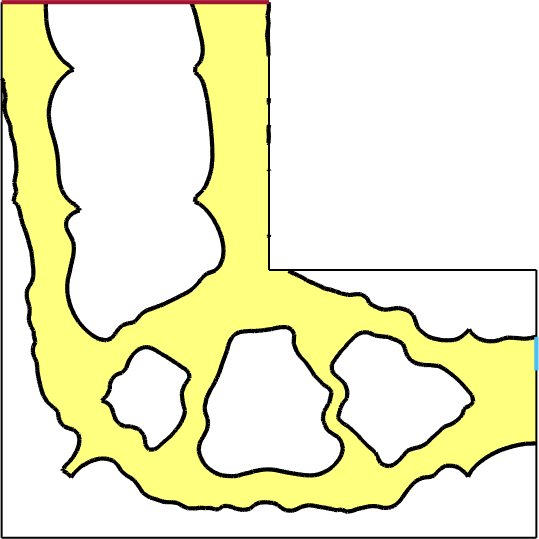}
\caption{Iteration 6}
\end{subfigure}
\begin{subfigure}[b]{\colRatioOne\textwidth}\centering
\includegraphics[width=\colRatioTwo\linewidth]{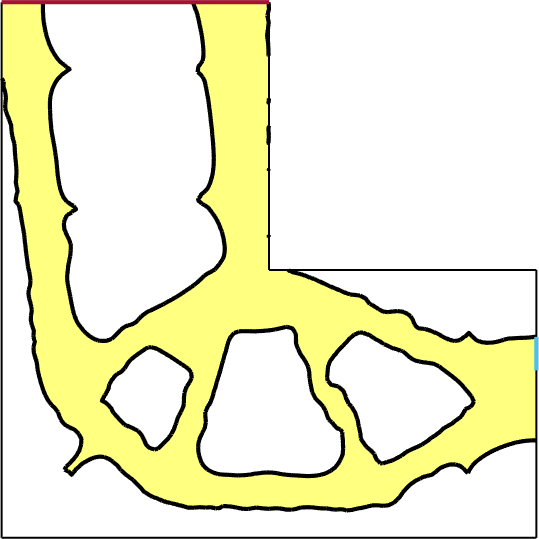}
\caption{Iteration 7}
\end{subfigure}
\begin{subfigure}[b]{\colRatioOne\textwidth}\centering
\includegraphics[width=\colRatioTwo\linewidth]{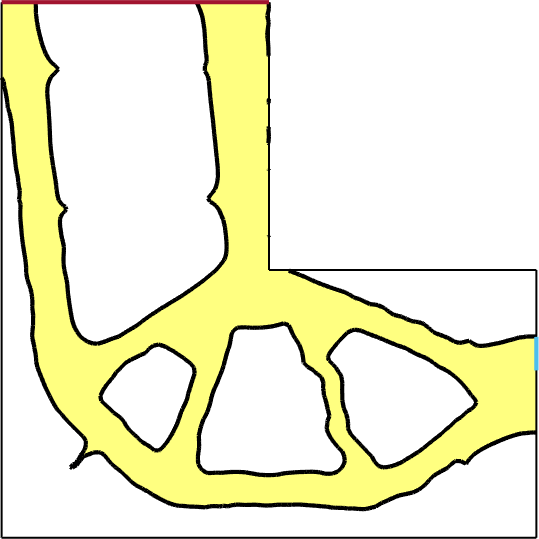}
\caption{Iteration 8}
\end{subfigure}

\vspace{1em}
\begin{subfigure}[b]{\colRatioOne\textwidth}\centering
\includegraphics[width=\colRatioTwo\linewidth]{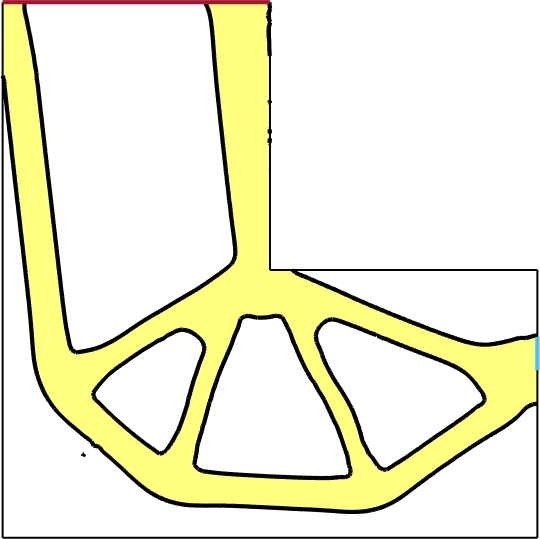}
\caption{Iteration 15}
\end{subfigure}
\begin{subfigure}[b]{\colRatioOne\textwidth}\centering
\includegraphics[width=\colRatioTwo\linewidth]{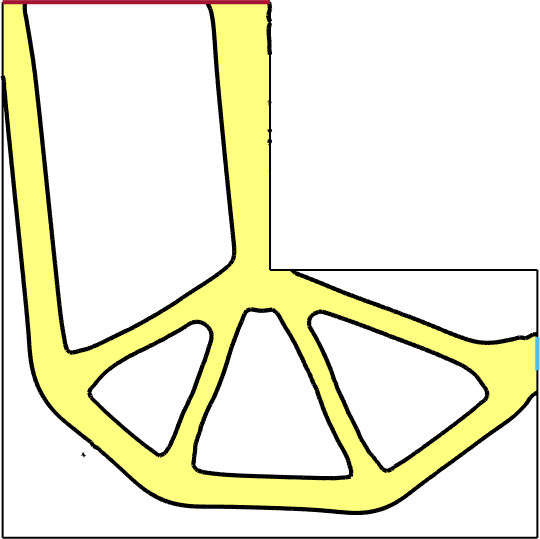}
\caption{Iteration 20}
\end{subfigure}
\begin{subfigure}[b]{\colRatioOne\textwidth}\centering
\includegraphics[width=\colRatioTwo\linewidth]{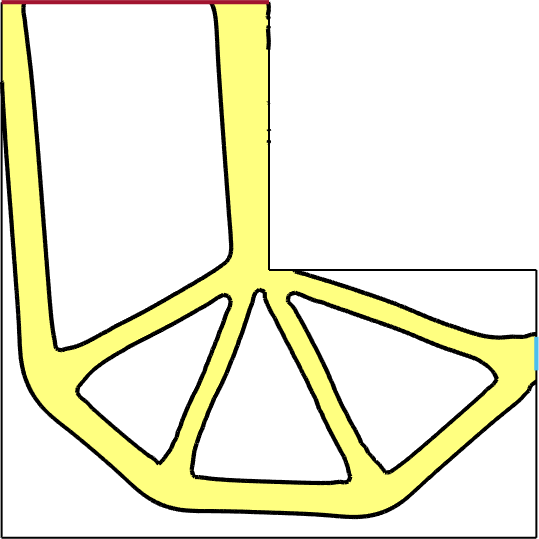}
\caption{Iteration 40}
\end{subfigure}

\caption{Sample iteration states in the L-shape beam problem ($k=2$, $h=0.05$). Note the topology changes occurring when void separations break.}
\label{fig:L-shape-sequence}
\end{figure}
\newpage 
\section{Conclusions}

We developed, implemented, and demonstrated a shape and topology 
optimization algorithm for the linear elasticity compliance problem 
based on: 
\begin{itemize}
\item A cut finite element method for linear elasticity with higher order 
polynomials on triangles and quadrilaterals.
\item A piecewise linear or bilinear level-set representation of the boundary. 
In the case of higher order polynomials we use a refined mesh for the level-set 
representation to allow a more flexible and complex geometric variation and 
utilize the additional accuracy of the higher order elements.
\item A Hamilton-Jacobi transport equation to update the geometry with 
a velocity field given by the largest descending direction of the shape 
derivative with a certain regularity requirement.  
\end{itemize}
Our numerical examples demonstrate the performance of the method and show, 
in particular, that when using higher order elements and a level-set on a 
refined mesh fine scale geometric features of thickness smaller than the 
element size can be represented and stable and accurate solutions are 
produced by the cut finite element method in each of the iterations.

Directions for future work include:
\begin{itemize}
\item Fine tuning of the local geometry based on stress measures.
\item More general design constraints.
\item Use of adaptive mesh refinement to enhance local accuracy of 
the solution and geometry representation.
\end{itemize}

\clearpage
\bibliographystyle{abbrv}
\bibliography{elasticity-optimization}

\begin{thebibliography}{10}

\bibitem{ADF14}
G.~Allaire, C.~Dapogny, and P.~Frey.
\newblock Shape optimization with a level set based mesh evolution method.
\newblock {\em Comput. Methods Appl. Mech. Engrg.}, 282:22--53, 2014.

\bibitem{AJT04}
G.~Allaire, F.~Jouve, and A.-M. Toader.
\newblock Structural optimization by the level-set method.
\newblock In {\em Free boundary problems ({T}rento, 2002)}, volume 147 of {\em
  Internat. Ser. Numer. Math.}, pages 1--15. Birkh\"auser, Basel, 2004.

\bibitem{BeSi03}
M.~P. Bends{\o}e and O.~Sigmund.
\newblock {\em Topology optimization}.
\newblock Springer-Verlag, Berlin, 2003.
\newblock Theory, methods and applications.

\bibitem{BCHLM15}
E.~Burman, S.~Claus, P.~Hansbo, M.~G. Larson, and A.~Massing.
\newblock Cut{FEM}: discretizing geometry and partial differential equations.
\newblock {\em Internat. J. Numer. Methods Engrg.}, 104(7):472--501, 2015.

\bibitem{BernoulliReport}
E.~Burman, D.~Elfverson, P.~Hansbo, M.~G. Larson, and K.~Larsson.
\newblock A cut finite element method for the {B}ernoulli free boundary value
  problem.
\newblock arXiv:1609.02836, 2016.

\bibitem{BH12}
E.~Burman and P.~Hansbo.
\newblock Fictitious domain finite element methods using cut elements: {II}.
  {A} stabilized {N}itsche method.
\newblock {\em Appl. Numer. Math.}, 62(4):328--341, 2012.

\bibitem{ChKl09}
P.~W. Christensen and A.~Klarbring.
\newblock {\em An introduction to structural optimization}, volume 153 of {\em
  Solid Mechanics and its Applications}.
\newblock Springer, New York, 2009.

\bibitem{CS85}
R.~Correa and A.~Seeger.
\newblock Directional derivates in minimax problems.
\newblock {\em Numer. Funct. Anal. Optim.}, 7(2-3):145--156, 1984/85.

\bibitem{DeZo11}
M.~Delfour and J.~Zolésio.
\newblock {\em Shapes and Geometries}.
\newblock Society for Industrial and Applied Mathematics, second edition, 2011.

\bibitem{cutfem-elasticity}
P.~Hansbo, M.~G. Larson, and K.~Larsson.
\newblock Cut finite element methods for linear elasticity problems.
\newblock Technical report, Umeå University, Dept. Mathematics and
  Mathematical Statistics, 2016.

\bibitem{HP15}
R.~Hiptmair and A.~Paganini.
\newblock Shape optimization by pursuing diffeomorphisms.
\newblock {\em Comput. Methods Appl. Math.}, 15(3):291--305, 2015.

\bibitem{HPS15}
R.~Hiptmair, A.~Paganini, and S.~Sargheini.
\newblock Comparison of approximate shape gradients.
\newblock {\em BIT}, 55(2):459--485, 2015.

\bibitem{Nitsche71}
J.~Nitsche.
\newblock \"{U}ber ein {V}ariationsprinzip zur {L}\"osung von
  {D}irichlet-{P}roblemen bei {V}erwendung von {T}eilr\"aumen, die keinen
  {R}andbedingungen unterworfen sind.
\newblock {\em Abh. Math. Sem. Univ. Hamburg}, 36:9--15, 1971.

\bibitem{Sewi00}
J.~A. Sethian and A.~Wiegmann.
\newblock Structural boundary design via level set and immersed interface
  methods.
\newblock {\em J. Comput. Phys.}, 163(2):489--528, 2000.

\bibitem{SZ92}
J.~Sokolowski and J.-P. Zolesio.
\newblock {\em Introduction to shape optimization}.
\newblock Springer, 1992.

\bibitem{WaWa03}
M.~Y. Wang, X.~Wang, and D.~Guo.
\newblock A level set method for structural topology optimization.
\newblock {\em Comput. Methods Appl. Mech. Engrg.}, 192(1-2):227--246, 2003.

\end{thebibliography}

\end{document}